\colorlet{purpleB70}{blue!70!red}
\colorlet{orangeR65}{red!65!yellow}
\definecolor{red2}{HTML}{d41173}
\definecolor{neongreen}{HTML}{1bf702}
\definecolor{radicalred}{HTML}{FF355E}
\definecolor{denim}{HTML}{1560BD}
\definecolor{darkcyan}{rgb}{0.0, 0.55, 0.55}
\definecolor{cilek}{HTML}{FF43A4}
\definecolor{mor}{HTML}{9F00C5}
\definecolor{phlox}{rgb}{0.87, 0.0, 1.0}
\definecolor{fluorescentpink}{HTML}{FF1493}
\definecolor{napiergreen}{rgb}{0.16, 0.5, 0.0}
\definecolor{kellygreen}{rgb}{0.3, 0.73, 0.09}
\definecolor{parisgreen}{HTML}{ 50C878 }
\definecolor{palatinateblue}{rgb}{0.15, 0.23, 0.89}
\definecolor{ceruleanblue}{rgb}{0.16, 0.32, 0.75}
\definecolor{brandeisblue}{rgb}{0.0, 0.44, 1.0}
\definecolor{KLMblue}{HTML}{0FC0FC}
\definecolor{cinnamon}{rgb}{0.82, 0.41, 0.12}
\definecolor{darkorange}{rgb}{1.0, 0.55, 0.0}
\definecolor{darktangerine}{rgb}{1.0, 0.66, 0.07}
\definecolor{deepcarrotorange}{rgb}{0.91, 0.41, 0.17}
\definecolor{internationalorange}{HTML}{FF4F00}
\definecolor{persimmon}{HTML}{EC5800}
\definecolor{pumpkin}{HTML}{FF7518}
\definecolor{darkred}{rgb}{1,0,0} 
\definecolor{darkgreen}{rgb}{0,0.7,0}
\definecolor{darkblue}{rgb}{0,0,1}
\def\reflb#1#2{\begingroup
    #2%
    \def\@currentlabel{#2}%
    \phantomsection\label{#1}\endgroup
}
\numberwithin{equation}{section}
\newtheorem{Theorem}{Theorem}
\numberwithin{Theorem}{section}
\newtheorem   {Lemma}[Theorem]{Lemma}
\newtheorem   {Proposition}[Theorem]{Proposition}
\newtheorem   {Corollary}[Theorem]{Corollary}
\theoremstyle {definition}
\theoremstyle {remark}
\newtheorem   {Remark}[Theorem]{Remark}
\newtheorem   {Example}[Theorem]{Example}
\def    \eps    {\epsilon}
\newcommand{\CO}{{\mathcal O}}
\newcommand{\CS}{{\mathcal S}}
\newcommand{\supp}{\operatorname{supp}}
\newcommand{\id}{{\mathit id}}
\newcommand{\fb}{{\mathfrak b}}
\newcommand{\hL}{\hat{L}}
\def    \R      {{\mathbb R}}
\def    \N      {{\mathbb N}}
\def    \T      {{\mathbb T}}
\def    \12     {{\frac{1}{2}}}
\def    \p      {\partial}
\def    \vol     {\operatorname{vol}}
\def    \hh    {\mathrm{h}}
\newcommand    \htop  {\operatorname{h_{\scriptscriptstyle{top}}}}
\newcommand    \hvol  {\operatorname{h_{\scriptscriptstyle{vol}}}}
\newcommand   \hbr {\operatorname{\hbar}}
\begin{document}


\setlength{\smallskipamount}{6pt}
\setlength{\medskipamount}{10pt}
\setlength{\bigskipamount}{16pt}





\title [From Barcode Entropy to Metric Entropy]{From Barcode Entropy to Metric Entropy}

\author[Erman \c C\. inel\. i]{Erman \c C\. inel\. i}
\author[Viktor Ginzburg]{Viktor L. Ginzburg}
\author[Ba\c sak G\"urel]{Ba\c sak Z. G\"urel}

\address{E\c C: ETH Z\"urich, R\"amistrasse 101, 8092 Z\"urich,
  Switzerland}
\email{erman.cineli@math.eth.ch}

\address{VG: Department of Mathematics, UC Santa Cruz, Santa Cruz, CA
  95064, USA} \email{ginzburg@ucsc.edu}

\address{BG: Department of Mathematics, University of Central Florida,
  Orlando, FL 32816, USA} \email{basak.gurel@ucf.edu}

\subjclass[2020]{53D40, 37C40, 37J12, 37J46, 37J55} 

\keywords{Topological entropy, metric entropy, Lagrangian and
  Legendrian submanifolds, periodic orbits, Floer homology, symplectic
  homology, persistence modules}

\date{\today} 

\thanks{The work is partially supported by the NSF grants DMS-2304207
  (BG) and DMS-2304206 (VG), the Simons Foundation grants 855299 (BG)
  and MP-TSM-00002529 (VG), and the ERC Starting Grant 851701 via a
  postdoctoral fellowship (E\c{C})}

\begin{abstract}
  We establish a connection between barcode entropy and metric
    entropy. Namely, we show
    that the barcode entropy bounds the metric entropy from below for
    a measure from a specific class of invariant measures associated
    with a pair of Lagrangian or Legendrian submanifolds, which we
    call \emph{pseudo-chord measures}. This inequality refines, via
    the variational principle, the previously known upper bound of
    barcode entropy by topological entropy.
 \end{abstract}

\maketitle



\tableofcontents

\section{Introduction}
\label{sec:intro}
In this paper we continue the study of barcode entropy and establish
its connection with metric entropy for a certain class of invariant
measures of Hamiltonian diffeomorphisms and Reeb flows.

Barcode entropy is a Floer theoretic counterpart of topological
entropy. For Hamiltonian diffeomorphisms, the barcode entropy is the
exponential growth rate of the number of not-so-short bars in the
filtered Floer homology as a function of iteration. For Reeb flows,
one looks at this number for the filtered symplectic homology as a
function of the action. These invariants come in two forms: absolute
and relative. The absolute version uses fixed point Floer homology or
symplectic homology and thus provides a lower bound on
the growth of periodic orbits. In the relative version, we have a pair
of Lagrangian (or Legendrian) submanifolds fixed and work with
filtered Lagrangian Floer homology or wrapped Floer homology. Thus,
the relative barcode entropy gives a lower bound on the exponential
growth rate of the number of chords. The latter lower bounds are more
robust under small perturbations of submanifolds than the growth of
the actual number of chords. This fact leads to a relation between
barcode entropy and the volume growth and ultimately topological
entropy.

Barcode entropy was originally introduced in \cite{CGG:Entropy} in the
Hamiltonian setting and then extensively studied in
\cite{Ci,CGG:Growth,CGGM:Entropy,FLS,Fe1,Fe2,GGM,Me}. We will review
the definitions and basic properties in Section \ref{sec:barcode}.

The feature of barcode entropy which is absolutely essential for what
follows is that it bounds topological entropy from below. (In some
cases, there is also an opposite inequality capturing topological
entropy of hyperbolic sets, but this fact is less important for our
purposes in this paper.) Here we refine this inequality by showing
that in the relative case the barcode entropy bounds from below the
metric entropy for a measure from a specific class of invariant
measures associated with a pair of Lagrangian or Legendrian
submanifolds. We refer to this class as the \emph{pseudo-chord
  measures} and it is essential that pseudo-chord measures depend on
the Lagrangian/Legendrian submanifolds. Then the original bound with
topological entropy simply follows from the fact that the metric
entropy is bounded from above by the topological entropy; see, e.g.,
\cite{KH}. The main new and non-obvious point here is the definition
of a pseudo-chord measure given in Sections \ref{sec:Lagr} and
\ref{sec:Reeb}.

A similar inequality holds for absolute barcode entropy with chord
measures replaced by what we call \emph{pseudo-periodic measures}; see
Section \ref{sec:Per}. However, this class of measures is quite large
and includes all ergodic measures. As a consequence, the inequality,
taken literally, does not give new information; for it follows from
the variational principle and the known relations between barcode
entropy and topological entropy; \cite{CGG:Entropy,FLS}.

\medskip
\noindent{\bf Acknowledgements} We are grateful to David Burguet
for explaining to us some details of Yomdin's theory.

\section{Main results}
\label{sec:main-results}
\subsection{Barcode entropy}
\label{sec:barcode}
In this section we briefly review the definition of barcode
entropy. We start with the case of Hamiltonian diffeomorphisms and
then recast the definitions in the framework of Reeb flows.  For
further details and references, we refer the reader to
\cite{CGG:Entropy, CGGM:Entropy}.

Let $L_0$ and $L$ be two closed, monotone, Hamiltonian isotopic
Lagrangian submanifolds in a symplectic manifold $M$. We require $M$
to be compact or sufficiently well-behaved at infinity (e.g., convex)
so that the filtered Floer homology for the pair $(L_0,L)$ is
defined. This homology is not a persistence module in the conventional
sense, but its barcode is still defined at least when $L_0$ and $L$
are transverse and each bar is taken up to a shift; see
\cite{UZ}. Denote by $\fb_\eps(L_0,L)$ the number of bars of length
greater than $\eps>0$. This definition extends to not-necessarily
transverse pairs essentially by continuity; \cite[Sec.\
4]{CGG:Entropy}.

Next, let $\varphi\colon M\to M$ be a compactly supported
$C^\infty$-smooth Hamiltonian diffeomorphism.  By definition, the
\emph{$\eps$-barcode entropy of $\varphi$ relative to $L_0$ and $L$}
is
\begin{equation}
  \label{eq:hbar-eps}
  \hbr_\eps(\varphi; L_0,L):=\limsup_{k\to \infty}\frac{\log^+
    b_\eps(L_k,L)}{k}\textrm{ where }  L_k:=\varphi^k(L_0).
\end{equation}
Here and throughout the paper, the logarithm is taken with the same
base as in the definition of topological entropy, and $\log^+=\log$
except that $\log^+0=0$. Then the \emph{barcode entropy of $\varphi$
  relative to $L_0$ and $L$} is
 \begin{equation}
  \label{eq:hbar}
  \hbr(\varphi;L_0,L):=\lim_{\eps\to 0+} \hbr_\eps (\varphi;
  L_0,L)\in [0, \infty].
\end{equation}

When $M$ is closed and monotone or weakly monotone, we can apply this
construction to the Hamiltonian diffeomorphism $\id\times \varphi$ of
the symplectic square $M\times M$ with $L_0=L$ taken to be the
diagonal $\Delta\subset M\times M$. As a result, we obtain the
(absolute) $\eps$-barcode entropy $\hbar_\eps(\varphi)$ of $\varphi$
and the \emph{(absolute) barcode entropy} $\hbar(\varphi)$. More
explicitly, $\hbar_\eps(\varphi)$ is the exponential growth rate,
\eqref{eq:hbar-eps}, of the number, $\fb_\eps(\varphi^k)$, of bars of
length greater than $\eps>0$ in the filtered Floer homology of
$\varphi^k$.

Barcode entropy is closely related to topological entropy. Namely,
\cite[Thm.\ A]{CGG:Entropy} asserts that
\begin{equation}
  \label{eq:hbar<htop-rel}
\hbar(\varphi; L_0,L)\leq \htop(\varphi)
\end{equation}
and, in particular, 
\begin{equation}
  \label{eq:hbar<htop}
\hbar(\varphi)\leq \htop(\varphi).
\end{equation}
As a consequence, barcode entropy is finite. (The requirement that
$\varphi$ is $C^\infty$-smooth is essential at least on the level of
proofs; for the argument relies on Yomdin's theorem; \cite{Yo}.)
Inequality \eqref{eq:hbar<htop} can be strict: there exists a
Hamiltonian diffeomorphism $\varphi$ of a closed symplectically
aspherical manifold such that $\hbar(\varphi)=0$ but
$\htop(\varphi)>0$; see \cite{Ci}.

Furthermore, when $K\subset M$ is a compact hyperbolic invariant set
of $\varphi$,
\begin{equation}
  \label{eq:htop-hyp<hbar}
\htop(\varphi|_K)\leq \hbar(\varphi)
\end{equation}
by \cite[Thm.\ B]{CGG:Entropy}. Therefore, combining these
inequalities with the results of Katok from \cite{Ka80}, we see that
$$
\hbar(\varphi)=\htop(\varphi)\textrm{ when } \dim M=2;
$$
\cite[Thm.\ C]{CGG:Entropy}. Inequality \eqref{eq:htop-hyp<hbar} is
extended to the relative case in \cite[Thm.\ 1]{Me}. Namely, assume
that $L_0$ and $L$ intersect $K$ and that $L_0$ contains a ball
centered at $p\in L_0\cap K$ in the unstable manifold of $K$ at $p$
and $L$ contains a ball centered at $q\in L\cap K$ in the stable
manifold. Furthermore, we require $K$ to be locally maximal and
topologically transitive. Then
  \begin{equation}
  \label{eq:htop-hyp<hbar-rel}
  \htop(\varphi|_K)\leq \hbar(\varphi;L_0,L).
\end{equation}
Formally speaking, this inequality does not directly imply
\eqref{eq:htop-hyp<hbar}, i.e., the two inequalities are logically
independent.
  
These constructions and results extend to Reeb flows. Consider the
Reeb flow $\varphi:=\{\varphi^t\mid t\in \R\}$ on the boundary $M$ of
a Liouville domain $W$. Then the filtered symplectic homology of $W$
is a genuine persistence module (see, e.g., \cite{CGGM:Entropy,FLS}),
and we denote by $\fb_\eps(s;\varphi)$ the number of bars of length
greater than $\eps>0$ beginning below $s\in \R$. (Here the grading of
the homology is inessential and we need not make any assumptions on
$c_1(TW)$.) Then the $\eps$-barcode entropy and the barcode entropy of
$\varphi$ is defined similarly to \eqref{eq:hbar-eps} and
\eqref{eq:hbar}:
  \begin{equation}
  \label{eq:hbar-Reeb}
  \hbr_\eps(\varphi):=\limsup_{s\to \infty}\frac{\log^+
    \fb_\eps(s;\varphi)}{s}\textrm{ and }  
  \hbr(\varphi):=\lim_{\eps\to 0+} \hbr_\eps (\varphi).
\end{equation}

In the relative version, following \cite{Fe1}, we start with closed
Legendrian submanifolds $L_0$ and $L$ in $M=\p W$ and require
$L_0$ and $L$ to admit exact compact Lagrangian fillings $\hL_0$
and $\hL$ to $W$ such that $L_0=\p \hL_0$ and $L=\p \hL$. Then we can
also make $\hL_0$ and $\hL$ be asymptotically conical Lagrangian
submanifolds in $W$. The latter condition means that, say, $\hL$ has
the form $\hL=L\times (1-\eps,1]$ within a symplectic collar
$M\times (1-\eps, 1]$ of $M=\p W$ in $W$.

Then in place of symplectic homology we use the wrapped Floer homology
persistence module for $(\varphi, \hL_0,\hL)$ to define
$\fb_\eps(s; L_0,L)$, the number of bars of length greater than $\eps$
beginning below $s$, and the barcode entropy $\hbr(\varphi; L_0, L)$
is defined similarly to \eqref{eq:hbar-Reeb}. (We have suppressed
$\varphi$ in the notation for the sake of brevity.) In fact,
$\fb_\eps(s; L_0,L)$ is independent of the choice of $\hL_0$ and $\hL$
up to an additive constant, and hence $\hbr(\varphi; L_0, L)$ is
completely determined by $\varphi$ and $L_0$ and $L$. Only the
existence of exact Lagrangian fillings $\hL_0$ and $\hL$ matters.

With these definitions, the above results also hold for Reeb
flows. The Reeb analogue of \eqref{eq:hbar<htop-rel} is proved in
\cite{Fe1}. The absolute version of this inequality,
\eqref{eq:hbar<htop}, which now does not directly follow from its
relative counterpart is established in \cite{FLS}; see also
\cite{CGGM:Entropy}. The Reeb counterpart of the lower bound,
\eqref{eq:htop-hyp<hbar}, is \cite[Thm.\ B]{CGGM:Entropy}, and its
relative version, \eqref{eq:htop-hyp<hbar-rel}, is the main result of
\cite{Fe2}. Finally, we again have
$$
\hbar(\varphi)=\htop(\varphi)\textrm{ when } \dim M=3;
$$
see \cite{CGG:Entropy}. For geodesic flows, these results, other than
\eqref{eq:htop-hyp<hbar-rel}, are directly proved in \cite{GGM} in the
context of Morse theory.

\begin{Remark}[Applications]
  The lower bounds between barcode entropy and topological entropy,
  relative and absolute, for Reeb and geodesic flows along the
  lines of \eqref{eq:hbar<htop-rel} and \eqref{eq:hbar<htop} were
  established in \cite{CGGM:Entropy, FLS, Fe1, GGM}. They encompass
  several ``classical'' results relating positivity of topological
  entropy to topology of the underlying Riemannian manifold or
  symplectic topology of the contact manifold. The first of these is
  Dinaburg's theorem, \cite{Di}, giving a lower bound for the
  topological entropy of a geodesic flow via the growth of the
  fundamental group. Likewise, the lower bound from \cite{Pa} for the
  topological entropy by the exponential growth rate of the homology
  of the based loop space immediately follows from \cite{GGM}. Both of
  these results have been generalized to Reeb flows on unit
  (co)tangent bundles in \cite{MS} and \cite[Prop.\ 1.8]{FS},
  and this generalization is in turn covered by \cite{Fe1}. Several
  yet more general lower bounds on topological entropy of Reeb flows
  via the growth of various types of symplectic or contact homology
  have been proved in \cite{Al:Anosov,Al:Cyl,Al:Leg,AM,Me:Th}. Some of
  these lower bounds also follow, directly or at least conceptually,
  from the results of \cite{CGGM:Entropy,FLS,Fe1}.
\end{Remark}

\begin{Remark}
  Connections between the properties of Floer homology type
  persistence modules and symplectic dynamics go far beyond
  topological and metric entropy. We refer the reader to, e.g.,
  \cite{BG,CGG:Spectral,CGG:ReebHZ,GU,Hu,LRSV,PRSZ,PS,Sh,SZ,UZ} for an
  admittedly incomplete list of sample results and further discussion
  and references. 
\end{Remark}

\subsection{Lagrangian pseudo-chord measures}
\label{sec:Lagr}
The definition of approximate chord measures and pseudo-chord
measures, which are central to our results, can be given in a very
general context of topological dynamics.

Let $\varphi\colon M\to M$ be a compactly supported diffeomorphism of
a smooth manifold $M$.  By definition, the $k$-orbit $\CO:=\CO_k(x)$ of
$\varphi$ through $x\in M$ is a sequence
$\{x, \varphi(x), \ldots, \varphi^{k}(x)\}$. To $\CO$, we associate
the probability measure
\begin{equation}
  \label{eq:delta_O}
\delta_\CO=\frac{1}{k+1}\sum_{i=0}^k \delta_{\varphi^i(x)}.
\end{equation}
Likewise, we associate the probability measure
\begin{equation}
  \label{eq:delta_Gamma}
\delta_\Gamma=\frac{1}{m}\sum_{i=1}^m \delta_{\CO_k(x_i)}
\end{equation}
to a collection of $k$-orbits
$\Gamma=\{\CO_k(x_1),\ldots,\CO_k(x_m)\}$. The points $x_i\in M$ are
not required to be distinct, and thus $\Gamma$ is a multiset. Note
however that here the ``length'' $k$ is the same for all orbits in the
collection.

Next, let $L_0$ and $L$ be two subsets $M$. The case we are actually
interested in is where $L_0$ and $L$ are properly embedded
submanifolds of $M$. Fix an open set $U\supset L$. The reader should
think of $U$ as a small tubular neighborhood of $L$ when the latter is
a submanifold. A \emph{$U$-approximate $k$-chord} (or just an
approximate chord when $U$ and $k$ are clear from the context) is a
$k$-orbit $\CO_k(x)$ with $x\in L_0$ and $\varphi^k(x)\in U$.

Assume that for a sequence $k_j\to \infty$ and a sequence
$\Gamma_j=\{\CO_{k_j}(x_{j1}),\ldots,\CO_{k_j}(x_{{jm_j}})\}$ of
collections of $U$-approximate $k_j$-chords the measures
$\delta_{\Gamma_j}$ converge to a measure $\mu$ in the weak$^*$
topology. Then we say that $\mu$ is a \emph{$U$-approximate chord
  measure} (associated with $L_0$ and $L$). Note that $\mu$ is
automatically an invariant probability measure. The set of
  $U$-approximate chord measures is weak$^*$-closed. This follows by
  the diagonal process from the fact that the set (with the weak$^*$
  topology) of probability measures on a compact metric space is
  metrizable; \cite{Ru}.

Furthermore, an
invariant probability measure $\mu$ is said to be a \emph{pseudo-chord
  measure} (again, associated with $L_0$ and $L$) if
$\mu=\lim\mu_\ell$ for a sequence $U_\ell$, $\ell\in\N$, of open
neighborhoods of $L$ shrinking to $L$ and a sequence $\mu_\ell$ of
$U_\ell$-approximate chord measures. Here we say that $U_\ell$ is
shrinking to $L$ if
$$
\bigcap_\ell U_\ell=L
$$
and the neighborhoods $U_\ell$ are nested, i.e.,
$U_1\supset U_2\supset\ldots\supset L$.

More explicitly, $\mu$ is a pseudo-chord measure if there exists a
sequence of neighborhood $U_\ell$ shrinking to $L$ and a sequence of
collections $\Gamma_\ell$ of $U_\ell$-approximate $k_\ell$-chords with
$k_\ell\to\infty$ such that $\delta_{\Gamma_\ell}\to \mu$. Note that
here the chords comprising $\Gamma_\ell$ are not necessarily true
chords from $L_0$ to $L$ but only approximate chords. (Hence,
``pseudo'' in the term.) In general, $\mu$ cannot be obtained as the
limit of measures $\delta_\Gamma$ where $\Gamma$ comprises genuine
chords from $L_0$ to $L$; see Remark \ref{rmk:POvsPPO}. We will call
such measures \emph{genuine (or true) chord measures}.

In these definitions, we could have assumed that $M$ is just a metric
space and $\varphi$ is a continuous map. However, in such a great
generality the definition is probably meaningless. On the other hand,
as we have already mentioned, the case where $L_0$ and $L$ are
properly embedded submanifolds of $M$ is already of interest; see
Section~\ref{sec:proof-hbar-hmu}.

In this paper we are concerned with the role of pseudo-chord measures
in symplectic dynamics.  Thus we will require $L_0$ and $L$ to be
closed Lagrangian submanifolds of a symplectic manifold $M$ and
$\varphi:=\varphi_H\colon M\to M$ to be a compactly supported
Hamiltonian diffeomorphism. In this setting we say that a pseudo-chord
measure associated with $L_0$ and $L$ is \emph{Lagrangian}. For the
sake of simplicity, we will assume that $L_0$ and $L$ are monotone,
although this condition can be significantly relaxed. As in the
previous section, the manifold $M$ is required to be closed or
sufficiently nice at infinity (e.g., convex or tame) so that
Lagrangian Floer homology for closed monotone Lagrangian submanifolds
is defined.

The sets of approximate chord measures or pseudo-chord measures can be
empty. We say that $L$ is \emph{accessible} from $L_0$ if for every
neighborhood $U$ of $L$ we have $L_k\cap U\neq\emptyset$ for
infinitely many $k\in\N$, where $L_k = \varphi^k(L_0)$. This is
equivalent to that the set of pseudo-chord measures associated with
$L_0$ and $L$ is non-empty. Clearly, $L$ is accessible from $L_0$
whenever $\hbar(\varphi; L_0,L)>0$.

\begin{Theorem}
  \label{thm:main-hbar-hmu}
  Let $L_0$, $L$, $U$ and $\varphi$ be as above. Assume that $L$ is
  accessible from $L_0$. Then there exists a $U$-approximate chord
  measure $\mu$ associated with $L_0$ and $L$ such that
\begin{equation}
  \label{eq:main-chord}
\hbar(\varphi; L_0,L)\leq\hh_\mu(\varphi).
\end{equation}
\end{Theorem}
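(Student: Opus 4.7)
The argument refines the inequality $\hbr(\varphi;L_0,L)\leq\htop(\varphi)$ of \cite{CGG:Entropy} into a metric-entropy statement by constructing an explicit $U$-approximate chord measure that realizes the lower bound, rather than coupling a volume bound with the variational principle. The construction follows the Misiurewicz--Katok scheme: build empirical measures carried on long approximate chords, take a weak-$*$ limit, and extract the metric-entropy bound from a Bowen-separated subfamily of starting points.

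\textbf{Step 1 (chords from bars).} Fix $\epsilon>0$ with $\hbar_\epsilon:=\hbr_\epsilon(\varphi;L_0,L)>0$. By definition, along a subsequence $k_j\to\infty$ one has $b_\epsilon(L_{k_j},L)\geq e^{k_j(\hbar_\epsilon-o(1))}$. After a $C^0$-small Hamiltonian perturbation supported inside $U$ bringing $L$ into transverse position with $L_{k_j}$, each long bar corresponds to a pair of intersection points in $L_{k_j}\cap L\subset U$; in particular there are at least $N_j:=b_\epsilon(L_{k_j},L)$ such points, whose preimages $x_{j,1},\dots,x_{j,N_j}\in L_0$ under $\varphi^{k_j}$ each generate a $U$-approximate $k_j$-chord $\CO_{k_j}(x_{j,i})$.

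\textbf{Steps 2 and 3 (measure and entropy, the main obstacle).} Form $\mu_j:=\delta_{\Gamma_j}$ with $\Gamma_j:=\{\CO_{k_j}(x_{j,i}):1\leq i\leq N_j\}$ and extract, via Banach--Alaoglu, a weak-$*$ subsequential limit $\mu_\epsilon$. This $\mu_\epsilon$ is automatically a $U$-approximate chord measure and $\varphi$-invariant, since each $\delta_{\CO_{k_j}(x)}$ is invariant modulo an error of order $1/k_j$. The crux is to show
\begin{equation*}
\hh_{\mu_\epsilon}(\varphi) \;\geq\; \hbar_\epsilon.
\end{equation*}
My plan is to extract, after discarding at most a subexponential fraction, a $(k_j,\delta)$-Bowen-separated subfamily of $\{x_{j,i}\}$ for some $\delta=\delta(\epsilon)>0$ independent of $j$; Misiurewicz's inequality applied to the empirical measures $\mu_j$ then yields $\hh_{\mu_\epsilon}(\varphi)\geq\limsup_j(\log N_j)/k_j=\hbar_\epsilon$. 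The separation itself is the delicate point: two approximate chords whose orbits remain $\delta$-close for $k_j$ iterations correspond to intersection pairs whose Floer persistence is simultaneously destroyed by a Hamiltonian perturbation of size controlled by $\delta$; if $\delta\ll\epsilon$, this contradicts the hypothesis that the corresponding bars have length $>\epsilon$. Making this quantitative should follow the Yomdin--Gromov reparametrization scheme used in \cite{CGG:Entropy} to prove $\hbr\leq\htop$, but retaining the discrete separated-set structure of the intersection points rather than coarsening to a volume bound.

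\textbf{Step 4 (passing to $\epsilon\to 0$).} Choosing $\epsilon_\ell\to 0$ produces $U$-approximate chord measures $\mu_{\epsilon_\ell}$ with $\hh_{\mu_{\epsilon_\ell}}(\varphi)\geq\hbar_{\epsilon_\ell}$, and a diagonalization across the defining collections $\Gamma_{j,\ell}$ exhibits a weak-$*$ limit $\mu$ that is still a $U$-approximate chord measure. Newhouse's upper semicontinuity of metric entropy for $C^\infty$ diffeomorphisms then gives
\begin{equation*}
\hh_\mu(\varphi) \;\geq\; \limsup_{\ell\to\infty}\hh_{\mu_{\epsilon_\ell}}(\varphi) \;\geq\; \lim_{\ell\to\infty}\hbar_{\epsilon_\ell} \;=\; \hbr(\varphi;L_0,L),
\end{equation*}
which completes the proof. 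The principal technical hurdle is the Bowen-separation estimate in Step 3: translating a Floer-persistent bar, a global symplectic invariant, into uniform dynamical separation of the associated approximate chords is far more refined than the volume comparison that sufficed to conclude $\hbr\leq\htop$.
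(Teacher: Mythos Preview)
Your overall architecture—empirical measures on $U$-approximate chords, Misiurewicz's inequality applied to a Bowen-separated family, then a diagonal limit with Newhouse upper semicontinuity—is exactly the paper's. The gap is in Step~3: the mechanism you propose for extracting a $(k_j,\delta)$-separated subfamily from the bar endpoints does not work. The claim that $d_{k_j}$-close chords have their bars ``simultaneously destroyed by a Hamiltonian perturbation of size controlled by $\delta$'' conflates barcode stability (an action/Hofer-norm statement about perturbing $L$) with dynamical closeness of orbits. A $\delta$-small perturbation of $L$ controls endpoints, not full orbits; and even if two intersection points have nearby actions, nothing prevents them from being endpoints of distinct long bars. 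There is no route from bar persistence to Bowen separation of the underlying chords along these lines.

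The paper's proof avoids this by not insisting that the separated chords be the bar endpoints. It inserts the volume growth entropy $\hvol(\varphi;L_0,U)$ as an intermediate: the Floer input is spent once, in the inequality $\hbr(\varphi;L_0,L)\le\hvol(\varphi;L_0,U)$ from \cite{CGG:Entropy}, which yields a volume lower bound rather than a separation statement. Separation is then produced by a purely smooth-dynamical argument: cover $L_k\cap U$ by images of $d_k$-balls $B_k(x,\eta')$ and invoke Yomdin's local estimate \cite[Thm.~1.8]{Yo} to show each piece $\varphi^k\big(L_0\cap B_k(x,\eta')\big)$ has subexponential volume; the exponential growth of $\vol(L_k\cap U)$ then forces the number $|\CS_k(\eta)|$ of $d_k$-separated $U$-approximate chords to grow at rate at least $\hvol$. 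One applies \cite[Lemma~4.5.2]{KH} to this family (not to the bar endpoints of your Step~1) and finishes with the diagonal process. In short, Yomdin is used to convert volume into separation, not bars into separation; without the volume intermediate the argument does not close.
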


Shrinking $U$ to $L$, passing to the limit and using the fact that the
map $\mu\mapsto \hh_\mu$ is upper semi-continuous in $\mu$ (see
\cite{Ne}) we obtain the following. Recall that $\supp(\varphi)$ is
compact, and hence the set of probability measures on $\supp(\varphi)$
is compact in the weak$^*$ topology.

\begin{Corollary}
  \label{cor:main-hbar-hmu}
  There exists a pseudo-chord measure $\mu$ associated with $L_0$ and $L$
  satisfying~\eqref{eq:main-chord}.
\end{Corollary}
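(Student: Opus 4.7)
The plan is to deduce Corollary \ref{cor:main-hbar-hmu} from Theorem \ref{thm:main-hbar-hmu} by a compactness argument combined with the upper semi-continuity of metric entropy, exactly as indicated in the hint preceding the statement.

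First, fix a nested sequence of open neighborhoods $U_1 \supset U_2 \supset \cdots$ of $L$ with $\bigcap_{\ell \in \N} U_\ell = L$. Since $\varphi$ is compactly supported and $L_0$, $L$ are compact, we may arrange that all $U_\ell$'s together with $L_0$ and $\supp(\varphi)$ lie in a common compact set $K \subset M$. For each $\ell$, Theorem \ref{thm:main-hbar-hmu} supplies a $U_\ell$-approximate chord measure $\mu_\ell$ associated with $L_0$ and $L$ satisfying
$$
\hbar(\varphi; L_0, L) \leq \hh_{\mu_\ell}(\varphi).
$$
Each $\mu_\ell$ is a $\varphi$-invariant Borel probability measure supported in $K$, since the approximate chords used to build $\mu_\ell$ start in $L_0 \subset K$ and $\varphi$ preserves $K$.

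The space of Borel probability measures on the compact set $K$ is weak$^*$-compact and metrizable, so after passing to a subsequence we may assume $\mu_\ell \to \mu$ in the weak$^*$ topology for some $\varphi$-invariant Borel probability measure $\mu$. By the very definition from Section \ref{sec:Lagr}, the measure $\mu$ is then a pseudo-chord measure associated with $L_0$ and $L$: it is the weak$^*$ limit of $U_\ell$-approximate chord measures for a nested sequence of neighborhoods $U_\ell$ shrinking to $L$.

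It remains to verify \eqref{eq:main-chord} for this $\mu$. Since $\varphi$ is $C^\infty$-smooth (a standing hypothesis in the definition of barcode entropy), Newhouse's theorem \cite{Ne} asserts that the functional $\nu \mapsto \hh_\nu(\varphi)$ is upper semi-continuous on the space of $\varphi$-invariant Borel probability measures supported in $K$, equipped with the weak$^*$ topology. Therefore
$$
\hh_\mu(\varphi) \;\geq\; \limsup_{\ell \to \infty} \hh_{\mu_\ell}(\varphi) \;\geq\; \hbar(\varphi; L_0, L),
$$
as desired. There is essentially no obstacle beyond Theorem \ref{thm:main-hbar-hmu} itself; the only inputs are $C^\infty$-smoothness of $\varphi$, weak$^*$ compactness of probability measures on $K$, and Newhouse's upper semi-continuity result, each of which is standard.
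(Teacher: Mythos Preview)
Your proof is correct and follows exactly the approach indicated in the paper: apply Theorem~\ref{thm:main-hbar-hmu} to a sequence of neighborhoods $U_\ell$ shrinking to $L$, extract a weak$^*$-convergent subsequence of the resulting measures $\mu_\ell$, and invoke Newhouse's upper semi-continuity of $\mu\mapsto \hh_\mu(\varphi)$ to pass the inequality to the limit. The only difference is that you spell out the compactness argument in more detail than the paper does.
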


Since $\hh_\mu(\varphi)\leq \htop(\varphi)$ for any probability
measure $\mu$, as a consequence of Corollary \ref{cor:main-hbar-hmu}
we recover the lower bound, \eqref{eq:hbar<htop-rel}, on the
topological entropy via the barcode entropy from \cite[Thm.\
5.1]{CGG:Entropy}: $\hbar(\varphi, L_0,L)\leq\htop(\varphi)$. Also
note that the accessibility condition plays a purely formal role in
these results: without it the left-hand side is zero and the
right-hand side is not defined. It is essential for the proof of
Theorem \ref{thm:main-hbar-hmu} that the chords in question are only
approximate rather than genuine chords from $L_0$ to $L$, i.e., that
the end-points belong to $U$ but not necessarily to $L$.  We do not
know if Corollary \ref{cor:main-hbar-hmu} would still hold with the
set of pseudo-chord measures replaced by the more narrow class of
genuine chord measures; cf.\ Remark~\ref{rmk:POvsPPO}.

The proof of Theorem \ref{thm:main-hbar-hmu}, given in Section
\ref{sec:proof-hbar-hmu}, comprises two steps and utilizes yet another
entropy-type invariant -- the volume growth entropy
$\hvol(\varphi; L_0,U)$ defined in that section. The first step is the
inequality $\hbar(\varphi; L_0, L)\leq \hvol(\varphi; L_0,U)$
established in \cite{CGG:Entropy}. The second step is the inequality
$\hvol(\varphi; L_0,U)\leq \hh_\mu(\varphi)$ which holds in much greater
generality and is based on (the proof of) Yomdin's theorem; \cite{Yo}.

\begin{Remark}
  \label{rmk:chord_measures}
  The class of all Lagrangian pseudo-chord measures, where we allow
  $L_0$ and $L$ to vary, is quite large. For instance, it includes all
  ergodic measures; see Proposition \ref{prop:ergodic-apo}.  As a
  consequence, the variational principle holds for the class of all
  pseudo-chord measures, i.e.,
  $\sup_\mu\hh_\mu(\varphi)=\htop(\varphi)$ where the supremum is
  taken over all such measures $\mu$. At the same time it is not clear
  if a linear combination of pseudo-chord measures is again a
  pseudo-chord measure even with $L_0$ and $L$ fixed. (The problem is
  that two such measures can have different time sequences.)
\end{Remark}

\subsection{Approximate periodic measures}
\label{sec:Per}
Theorem \ref{thm:main-hbar-hmu} and the constructions from the
previous section readily translate to a relation between absolute
barcode entropy and metric entropy for measures associated with
approximate periodic orbits. However, it turns out that the resulting
class of measures is quite large and includes all ergodic measures
similarly to the class of all pseudo-chord measures; see Remark
\ref{rmk:chord_measures}. As a consequence, an analogue of
\eqref{eq:main-chord} in this setting follows immediately from
\cite[Thm.\ 5.1]{CGG:Entropy} and the fact that $C^\infty$-maps have
ergodic measures with maximal entropy, see \cite{Ne}. Hence, formally
speaking, the inequality does not give new information in this
case. However, we find the construction illuminating enough to warrant
a brief discussion.

Assume for the sake of simplicity that $M$ is a closed weakly monotone
symplectic manifold and $\varphi\colon M\to M$ is a Hamiltonian
diffeomorphism.  Let us apply the construction from Section
\ref{sec:Lagr} to the diagonal $\Delta=L_0=L$ in the symplectic
product $M\times M$ and the Hamiltonian diffeomorphism
$\id\times \varphi$ of $M\times M$. Pushing forward (approximate)
chord measures on $M\times M$ to the second factor results in the
notions of an approximate periodic measure and a pseudo-periodic
measure. More explicitly, the construction is as follows.

Fix $\eta>0$ and a distance function $d$ on $M$, and let
\begin{equation}
  \label{eq:U_eta}
U_\eta:=\{(x,y)\in
M\times M\mid d(x,y)<\eta\}.
\end{equation}
In the notation from Section \ref{sec:Lagr}, an
\emph{$\eta$-approximate $k$-periodic orbit} is an orbit $\CO_k(x)$
such that $d\big(x, \varphi^k(x)\big)<\eta$, i.e.,
$(x,\varphi^k(x))\in U_\eta$. To a collection $\Gamma$ of
$\eta$-approximate $k$-periodic orbits (with the same $k$), we
associate a probability measure $\delta_\Gamma$ given by
\eqref{eq:delta_O} and \eqref{eq:delta_Gamma}. An
\emph{$\eta$-approximate periodic measure} $\mu$ is the weak$^*$ limit
of a sequence of such measures $\delta_{\Gamma_j}$ as
$k_j\to\infty$. Clearly, $\mu$ is automatically an invariant
probability measure, and we could have replaced $U_\eta$ by an
arbitrary neighborhood $U$ of the diagonal. Furthermore, an invariant
probability measure $\mu$ is said to be a \emph{pseudo-periodic
  measure} if $\mu=\lim\mu_\ell$ for a sequence $\eta_\ell\to 0+$ and
a sequence of $\eta_\ell$-approximate periodic measures $\mu_\ell$.
In other words, $\mu$ is a pseudo-periodic measure if there exists a
sequence $\eta_\ell\to 0+$ and a sequence $\Gamma_\ell$ of collections
of $\eta_\ell$-approximate $k_\ell$-periodic orbits with
$k_\ell\to\infty$ such that $\delta_{\Gamma_\ell}\to \mu$.

In this setting, applying Theorem \ref{thm:main-hbar-hmu} or Corollary
\ref{cor:main-hbar-hmu} to the diagonal $L_0=L$ in $M\times M$,
replacing $\varphi$ by $\id\times \varphi$ and then projecting the
measure to the second factor, we obtain the inequality
\begin{equation}
  \label{eq:per}
\hbar(\varphi)\leq\hh_\mu(\varphi)
\end{equation}
from Corollary \ref{cor:main-hbar-hmu}. Here we use the identity
$\hh_\eta (\id \otimes \varphi) = \hh_{\pi_*\eta} (\varphi)$ which
holds for any invariant probably measure $\eta$. (This can be seen
using the Abramov--Rokhlin formula \cite{AR} and the fact that the
fiberwise entropy is zero.)  However, it is easy to show that in
general every ergodic measure is pseudo-periodic; see Proposition
\ref{prop:ergodic-apo}. Hence, as we have already pointed out,
\eqref{eq:per} also readily follows from \cite[Thm.\ 5.1]{CGG:Entropy}
and the fact that the function $\mu\mapsto\hh_\mu(\varphi)$ is upper
semi-continuous for $C^\infty$-diffeomorphisms, and hence $\varphi$
admits ergodic measures with maximal entropy, i.e.,
$\hh_\mu(\varphi)=\htop(\varphi)$; \cite{Ne}.

\begin{Remark}[Pseudo-periodic vs.\ periodic]
  \label{rmk:POvsPPO}

  A \emph{periodic measure} is defined as the limit of measures
  $\delta_\Gamma$ where all orbits comprising $\Gamma$ are periodic,
  but possibly have different periods; see, e.g., \cite{Ma:PM}. On the other hand, when
  we apply the diagonal construction to genuine chord measures, we
  obtain the limits of measures $\delta_{\Gamma_\ell}$ where all
  orbits in $\Gamma_\ell$ have the same period. By analogy with the
  chord case, we would call such measures \emph{genuinely
    periodic}. However, for discrete time the two notions agree as
  one can see from the proof of Proposition \ref{prop:pseudo-per}.

Furthermore, a periodic measure is pseudo-periodic in both
  continuous and discrete cases; see Proposition
  \ref{prop:pseudo-per}. The converse is not true: there are
pseudo-periodic measures which are not periodic. For instance, let
$\varphi\colon S^2\to S^2$ be an Anosov--Katok pseudo-rotation;
\cite{AK}. This is an area-preserving diffeomorphism of $S^2$ with
exactly three ergodic measures: two fixed points and the area form
$\omega$. Then $\omega$ is pseudo-periodic by Proposition
\ref{prop:ergodic-apo}. On the other hand, a periodic measure is
necessarily a convex linear combination of the $\delta$-measures at
the fixed points. In particular, a pseudo-chord measure from $L_0$ to
$L$ need not be genuine chord measure, i.e., the limit of measures
associated with true chords.  Similar examples exist for broad classes
of Reeb flows and Hamiltonian diffeomorphisms in all dimensions;
\cite{Ka,LRS}.
\end{Remark}

We do not know exactly how large the class of pseudo-periodic measures
is. While genuine periodic and ergodic measures are pseudo-periodic,
it is not clear if a convex linear combination of pseudo-periodic
measures is pseudo-periodic; cf.\ Remark~\ref{rmk:chord_measures}.

\subsection{Reeb flows}
\label{sec:Reeb}
The definitions and results from the previous two sections extend to
Reeb flows in a straightforward way and here we only briefly comment
on them.

Let $\varphi^t$ be a flow on a closed manifold $M$. To an integral
curve $\CO:=\CO_T(x)\colon [0,T]\to M$, $t\mapsto \varphi^t(x)$, we
associate the measure $\delta_\CO$ defined by
$$
\delta_\CO(f):=\frac{1}{T}\int_0^T f\big(\varphi^t(x)\big)\,dt,
$$
where we view measures as elements of the dual space to $C^0(M)$. For
a collection $\Gamma$ of $m$ not necessarily distinct orbits
$\CO_i:=\CO_T(x_i)$, we set
$$
\delta_\Gamma:=\frac{1}{m}\sum \delta_{\CO_i}.
$$
In what follows, we will always requite the time $T$ to be the same
for all orbits in $\Gamma$.

Next, let $L_0$ and $L$ be two closed subsets (e.g., closed
submanifolds) of $M$. Pick an open neighborhood $U\supset L$.  A
\emph{$U$-approximate $T$-chord} from $L_0$ to $L$ is an integral
curve $\CO_T(x)$ connecting $x\in L_0$ to $\varphi^T(x)\in U$. We say
that $\mu$ is a \emph{$U$-approximate chord measure} associated with
$L_0$ and $L$ if there exists a sequence of times $T_j\to \infty$ and
collections $\Gamma_j$ of $U$-approximate $T_j$-chords such that
$\delta_{\Gamma_j}\to \mu$ in the weak$^*$ topology. A pseudo-chord measure
associated with $L_0$ and $L$ is the limit of $U_\ell$-approximate
chord measures where neighborhoods $U_\ell$ shrink to $L$.

Let now $\varphi^t$ be the Reeb flow on the boundary $M=\p W$ of a
Liouville domain $W$. Furthermore, as in Section \ref{sec:barcode},
let $L_0$ and $L$ be Legendrian submanifolds of $M$ admitting exact
Lagrangian fillings $\hL_0$ and $\hL$ to $W$ which we can assume to
be asymptotically conical. Similarly to the Hamiltonian case, we say
that $L$ is \emph{accessible} from $L_0$ if for every neighborhood $U$
of $L$ we have $\varphi^{T_k}(L_0)\cap U\neq\emptyset$ for some sequence
$T_k\to\infty$. This is equivalent to that the set of pseudo-chord measures
is non-empty, which is the case if (but not only if)
$\hbar(\varphi,L_0,L)>0$.

\begin{Theorem}
  \label{thm:Reeb-hbar-chord}
  Assume that $L$ is accessible from $L_0$. Then for every
  $U\supset L$ there exists a $U$-approximate chord measure associated
  with $L_0$ and $L$ and hence also a pseudo-chord measure $\mu$ such
  that
$$
\hbar(\varphi; L_0,L)\leq\hh_\mu(\varphi).
$$
\end{Theorem}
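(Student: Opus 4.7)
The plan is to mirror, step by step, the proof of Theorem~\ref{thm:main-hbar-hmu}, now with the Hamiltonian diffeomorphism and Lagrangian Floer homology replaced by the Reeb flow and wrapped Floer homology. As in the Hamiltonian case, I would factor the argument through a volume-growth invariant $\hvol(\varphi;\hL_0,U)$, defined as the exponential growth rate in $T$ of the $n$-dimensional volume of $\varphi^T(\hL_0)\cap U$ with respect to any fixed Riemannian metric on $W$ compatible with a symplectic collar of $M=\p W$. The conclusion then follows from the chain
\[
  \hbr(\varphi;L_0,L) \;\leq\; \hvol(\varphi;\hL_0,U) \;\leq\; \hh_\mu(\varphi)
\]
for a $U$-approximate chord measure $\mu$ to be constructed in the course of establishing the second inequality.

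For the first inequality I would invoke the Reeb/Legendrian counterpart of \cite[Thm.\ 5.1]{CGG:Entropy}, which is essentially contained in \cite{Fe1}: the number of bars of length greater than $\eps$ beginning below action $s$ in the wrapped Floer persistence module of $(\hL_0,\hL)$ is controlled by the volume of $\varphi^s(\hL_0)$ inside a prescribed tubular neighborhood of $\hL$. Taking $\limsup$ as $s\to\infty$ and then $\eps\to 0+$ gives $\hbr(\varphi;L_0,L)\leq \hvol(\varphi;\hL_0,U)$; the statement for an arbitrary $U$, rather than merely a sufficiently small one, follows by monotonicity of volume in $U$.

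The second inequality carries the new content, and I would adapt the Yomdin-theoretic argument from Step~2 of Theorem~\ref{thm:main-hbar-hmu}. For each $T$ one extracts a $(T,\delta)$-separated subset $S_T\subset\{x\in\hL_0\mid \varphi^T(x)\in U\}$ with respect to the Bowen metric of the flow, whose cardinality is bounded below, up to a subexponential factor, by the volume of $\varphi^T(\hL_0)\cap U$. Forming the empirical measures
\[
  \mu_T \;=\; \frac{1}{|S_T|}\sum_{x\in S_T}\delta_{\CO_T(x)},
\]
with $\delta_{\CO_T(x)}$ as in Section~\ref{sec:Reeb}, and passing to a weak$^*$-convergent subsequence $\mu_{T_j}\to\mu$ with $T_j\to\infty$, every orbit $\CO_{T_j}(x)$ with $x\in S_{T_j}$ is by construction a $U$-approximate $T_j$-chord, so $\mu$ is automatically a $U$-approximate chord measure. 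A Katok-type lower bound on metric entropy from $(T,\delta)$-separated sets, combined with Yomdin's semicontinuity estimates, then yields $\hh_\mu(\varphi)\geq \limsup_j T_j^{-1}\log|S_{T_j}|\geq \hvol(\varphi;\hL_0,U)$.

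The principal obstacle I expect is the Yomdin-type volume-to-separated-set estimate in Step~2, which must be performed on the non-compact, only asymptotically conical submanifold $\hL_0$, localized to the neighborhood $U$ of the compact Legendrian $L\subset M$, in continuous time, and while preserving the ``endpoint in $U$'' constraint for every point of $S_T$. In particular, one must prevent the separated points from escaping into the conical ends of $\hL_0$ and rule out a weak$^*$ limit with mass leaking off $M=\p W$ into the symplectization direction. Once these technicalities are handled, the passage to the weak$^*$ limit and the identification of $\mu$ as a $U$-approximate chord measure are essentially formal, and shrinking $U$ to $L$ in combination with upper semi-continuity of $\mu\mapsto\hh_\mu(\varphi)$ then yields the promised pseudo-chord measure exactly as in Corollary~\ref{cor:main-hbar-hmu}.
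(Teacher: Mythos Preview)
Your proposal contains a conceptual misstep and takes a route different from the paper's.

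\textbf{The misstep.} You formulate the volume growth and the separated-set argument on the Lagrangian extension $\hL_0$ inside $W$, and then identify as the ``principal obstacle'' the non-compactness of $\hL_0$ and possible escape of mass into the symplectization direction. But the Reeb flow $\varphi^t$ lives on the closed contact manifold $M=\partial W$, and the chords, approximate chord measures, and volume growth are all defined there, using the Legendrian $L_0\subset M$ and a neighborhood $U\subset M$ of $L$. The Lagrangian extensions $\hL_0$, $\hL$ enter only in the definition of the wrapped Floer persistence module, not in the dynamics. Once you work on $M$ with $L_0$, the manifold is compact, $L_0$ is closed, and the obstacle you flag simply does not arise. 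Correspondingly, the inequality from \cite{Fe1} is $\hbr(\varphi;L_0,L)\leq \hvol(\varphi;L_0,U')$ for $U'\subset M$, not a statement about $\hL_0$.

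\textbf{The different route.} Even after this correction, your Step~2 proposes to redo the Yomdin/separated-set argument directly for the flow, using continuous-time Bowen balls. The paper avoids this entirely by reducing to the discrete case. It first proves a comparison lemma: for nested neighborhoods $\overline{U'}\subset U$ and all sufficiently small $T>0$,
\[
  \hvol(\varphi;L_0,U') \;\leq\; T^{-1}\,\hvol(\varphi^T;L_0,U),
\]
where the left side is the flow volume growth and the right side is the volume growth of the time-$T$ diffeomorphism $\varphi^T$. Then it simply applies the already-proved Proposition~\ref{prop:vol-chord} to $\varphi^T$, obtaining a $U$-approximate $\varphi^T$-chord measure $\mu$ with $\hvol(\varphi^T;L_0,U)\leq \hh_\mu(\varphi^T)=T\,\hh_\mu(\varphi)$; one checks that $\mu$ is in fact a $U$-approximate chord measure for the flow. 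This buys the paper a clean reuse of the Hamiltonian argument and of Yomdin's theorem in its standard discrete-time form, whereas your route would require a continuous-time version of the local Yomdin estimate~\eqref{eq:Yomdin}, which you do not supply.
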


This is a version of Theorem \ref{thm:main-hbar-hmu} and Corollary
\ref{cor:main-hbar-hmu} for Reeb flows. Remark
\ref{rmk:chord_measures} carries over to Legendrian chord measures
word-for-word. Again the accessibility requirement plays a purely
formal role: without it the left-hand side is zero and the right-hand
side is not defined. As in the Hamiltonian case, we do not know if
Theorem \ref{thm:Reeb-hbar-chord} holds for genuine chord
measures. The proof of Theorem \ref{thm:Reeb-hbar-chord} is quite
similar to the proof of Theorem \ref{thm:main-hbar-hmu} and is given
in Section \ref{sec:proof-hbar-hmu-Reeb}.

One can define approximate periodic measures and pseudo-periodic 
measures for flows by changing in a similar fashion discrete time to
continuous time in the definitions from Section \ref{sec:Per}. Then
again we have an analogue of Theorem \ref{thm:Reeb-hbar-chord},
similar to \eqref{eq:per}, for such measures, but it again follows
from known results.

\section{Proofs and refinements}

\subsection{Proof of Theorem \ref{thm:main-hbar-hmu}, refinements and
  further comments}
\label{sec:proof-hbar-hmu}
We will prove a slightly more precise version of Theorem
\ref{thm:main-hbar-hmu}. To state this result, we first need to make
several general definitions.  As in Section \ref{sec:Lagr}, let $L_0$
and $L_1$ be two closed submanifolds of $M$ and $\varphi\colon M\to M$
be a compactly supported diffeomorphism.

Fix an arbitrary Riemannian metric on $M$. Let $U$ be a fixed
neighborhood of $L$ and $L_k:=\varphi^k(L_0)$. We define the
\emph{volume growth entropy} for the triple $(\varphi; L_0, U)$ as
\begin{equation}
  \label{eq:hvol1}
  \hvol(\varphi; L_0, U) :=\limsup_{k\to\infty}
  \frac{\log^+\vol(L_{k}\cap U)}{k}.
\end{equation}
Clearly, $\hvol(\varphi; L_0, U)$ is independent of the background
metric. Furthermore, $\hvol$ is monotone increasing in $U$, i.e.,
$\hvol(\varphi; L_0, U')\leq \hvol(\varphi, L_0, U)$ when
$U'\subset U$. (As a side note, we see no reason why $\hvol$ would
have any semi-continuity properties in $U$.) Set
\begin{equation}
  \label{eq:hvol2}
\hvol(\varphi; L_0, L) :=\inf_{U\supset L}\hvol(\varphi; L_0, U).
\end{equation}
Note that here the infimum can be replaced by the lower limit.

This definition readily translates to the framework of approximate
periodic orbits in Section \ref{sec:Per} via the graph
construction. Even though we are only tangentially interested in this
case, let us briefly spell it out.  Let $U$ be a neighborhood of the
diagonal $\Delta$ in $M\times M$ and $G_k$ be the graph of
$\varphi^k$. We set
$$
\hvol(\varphi; U) :=\limsup_{k\to\infty}\frac{\log^+\vol(G_{k}\cap
  U)}{k}
\textrm{ and }
  \hvol(\varphi) :=\inf_{U\supset \Delta}\hvol(\varphi; U).
$$
Clearly, $\hvol(\varphi; U)$ and $\hvol(\varphi)$ are independent of
the background metric, and $\hvol(\varphi; U)$ is monotone in $U$.
The infimum can be replaced by the lower limit.

\begin{Theorem}
  \label{thm:vol-chord}
  In the setting of Theorem \ref{thm:main-hbar-hmu}, there exists a
  $U$-approximate chord measure $\mu$ associated with $L_0$ and $L$
  such that
  \begin{equation}
    \label{eq:vol-chord}
\hbar(\varphi; L_0,L)\leq\hvol(\varphi; L_0,U)\leq \hh_\mu(\varphi)
\end{equation}
and, as a consequence, a pseudo-chord measure $\mu$, again associated
with $L_0$ and $L$, with
$$
\hbar(\varphi; L_0,L)\leq\hvol(\varphi; L_0,L)\leq \hh_\mu(\varphi).
$$
\end{Theorem}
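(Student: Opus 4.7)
The plan is to follow the two-step split signalled by the authors. The first inequality $\hbar(\varphi; L_0, L) \le \hvol(\varphi; L_0, U)$ is established in \cite[Thm.\ 5.1]{CGG:Entropy} and I simply cite it. The real work is to produce, for the given $U$, a $U$-approximate chord measure $\mu$ with $\hh_\mu(\varphi) \ge \hvol(\varphi; L_0, U)$. Set $h := \hvol(\varphi; L_0, U)$. We may assume $h > 0$, since otherwise accessibility supplies at least one $U$-approximate chord measure and there is nothing to prove. Fix $\delta > 0$ and pick a subsequence $k_j \to \infty$ with $\vol(L_{k_j} \cap U) \ge e^{k_j(h-\delta)}$.

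The central technical input will be the quantitative form of Yomdin's theorem from \cite{Yo}, in the guise used in \cite{CGG:Entropy}. Combining the Yomdin--Gromov reparametrization lemma applied to the $C^\infty$ restrictions $\varphi^k|_{L_0}$ with a volume-packing estimate, one extracts, for each $\epsilon > 0$ and some function $\alpha(\epsilon) \to 0$ as $\epsilon \to 0+$ independent of $k_j$, an $\epsilon$-separated subset $E_j \subset L_{k_j} \cap U$ with
\[
\#E_j \ge e^{k_j(h - \delta - \alpha(\epsilon))}.
\]
Each $y \in E_j$ writes uniquely as $y = \varphi^{k_j}(x_y)$ with $x_y \in L_0$, so $\CO_{k_j}(x_y)$ is a $U$-approximate $k_j$-chord from $L_0$ to $L$; pairwise $\epsilon$-separation of the endpoints in $M$ alone forces the orbits $\{\CO_{k_j}(x_y)\}_{y \in E_j}$ to be $(k_j, \epsilon)$-separated in the Bowen metric. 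Setting $\Gamma_j := \{\CO_{k_j}(x_y) : y \in E_j\}$ and $\nu_j := \delta_{\Gamma_j}$, a weak$^*$ accumulation point $\mu_\epsilon$ of $\{\nu_j\}$ is by definition a $U$-approximate chord measure, is automatically $\varphi$-invariant, and by the standard Misiurewicz entropy estimate for empirical measures built from $(k_j, \epsilon)$-separated sets (see, e.g., \cite{KH}) satisfies
\[
\hh_{\mu_\epsilon}(\varphi) \ge h - \delta - \alpha(\epsilon).
\]
A diagonal extraction letting $\delta \to 0$ and $\epsilon \to 0$, combined with upper semi-continuity of $\mu \mapsto \hh_\mu$ for $C^\infty$ diffeomorphisms \cite{Ne}, then produces a $U$-approximate chord measure $\mu$ with $\hh_\mu(\varphi) \ge h$, completing the first assertion.

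The pseudo-chord consequence follows at once: choose nested $U_\ell \searrow L$, apply the first assertion to each $U_\ell$ to obtain $U_\ell$-approximate chord measures $\mu_\ell$ with $\hh_{\mu_\ell}(\varphi) \ge \hvol(\varphi; L_0, U_\ell)$, extract a weak$^*$ subsequential limit $\mu$ (automatically a pseudo-chord measure by definition), and invoke upper semi-continuity of entropy together with $\hvol(\varphi; L_0, L) = \inf_\ell \hvol(\varphi; L_0, U_\ell)$. The hard part will be the Yomdin step: upgrading exponential volume growth of $L_{k_j} \cap U$ to a quantitative lower bound on the size of $\epsilon$-separated subsets with error $\alpha(\epsilon)$ uniform in $k$. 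This is precisely where the $C^\infty$-hypothesis is used, and it is the same technical heart as in the proof of $\hvol \le \htop$, deployed here in the refined, relative, $U$-localized form. Everything else --- Misiurewicz averaging, weak$^*$ compactness, and the checks that the limits stay inside the specified classes of $U$-approximate and pseudo-chord measures --- is standard bookkeeping on the definitions of $\delta_\CO$ and $\delta_\Gamma$.
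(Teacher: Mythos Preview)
Your argument has a genuine gap at the Yomdin step, and it is not a matter of missing detail but a wrong mechanism. You claim to produce, for each $k_j$, a subset $E_j\subset L_{k_j}\cap U$ of points that are $\eps$-separated \emph{in the ambient metric on $M$} with $\#E_j\geq e^{k_j(h-\delta-\alpha(\eps))}$. This is impossible: $U$ has compact closure in $M$, so the maximal number of $\eps$-separated points it contains is a finite constant depending only on $\eps$ and $U$, not on $k_j$. Hence $\#E_j$ cannot grow with $k_j$ at all, let alone exponentially. Your subsequent observation that ambient $\eps$-separation of endpoints forces $(k_j,\eps)$-Bowen separation of the orbits is correct but vacuous, since the premise cannot hold for more than boundedly many points. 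Large $n$-dimensional volume of $L_{k_j}\cap U$ is perfectly compatible with $L_{k_j}$ being tightly folded inside a single $\eps$-ball of $M$; neither a packing argument nor Yomdin--Gromov prevents this.

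The paper's route is the correct one and is not merely a reorganization of yours. One separates the \emph{initial} points in $L_0$ (with endpoints in $U$) in the Bowen metric $d_k$: let $\CS_k(\eta)$ be a maximal $\eta$-$d_k$-separated such set. Bowen balls shrink as $k$ grows, so $|\CS_k(\eta)|$ is allowed to grow. The volume bound then goes in the covering direction: $L_k\cap U$ is covered by the $\varphi^k$-images of the Bowen balls $B_k(x,\eta')$, $x\in\CS_k(\eta)$, and the \emph{local} Yomdin theorem \cite[Thm.~1.8]{Yo} controls the image volume of each Bowen ball by $e^{k\alpha(\eta')}$ with $\alpha(\eta')\to 0$. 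This yields $|\CS_k(\eta)|\geq \vol(L_k\cap U)\,e^{-k\alpha(\eta')}$, which is exactly the exponential lower bound you want, but for Bowen-separated initial conditions rather than ambient-separated endpoints. From here your Misiurewicz/KH step, the weak$^*$ extraction, and the pseudo-chord consequence go through as you wrote them.
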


Theorem \ref{thm:main-hbar-hmu} and hence Corollary
\ref{cor:main-hbar-hmu} immediately follow from this result by
skipping the middle terms in the inequalities. An analogue of Theorem
\ref{thm:vol-chord} holds for a pseudo-periodic measure $\mu$, i.e.,
$\hbar(\varphi)\leq \hvol(\varphi)\leq \hh_\mu(\varphi)$. However, as
we have already pointed out, the first inequality is proved in
\cite{CGG:Entropy} and the second one does not carry new information.

\begin{proof}[Proof of Theorem \ref{thm:vol-chord}]
  Throughout the proof we can assume that $\hbar(\varphi, L_0,L)>0$;
  for otherwise the statement is void. The argument comprises two
  steps establishing, respectively, the first and the second
  inequality in \eqref{eq:vol-chord}.

  \medskip\noindent\emph{Step 1: From barcode entropy to volume
    growth.}  Let $L_0$ and $L$ be Hamiltonian isotopic, closed
  monotone Lagrangian submanifolds and $\varphi$ be a Hamiltonian
  diffeomorphism as in the statement of the theorem. Then, by
  \cite[Sec\ 5.2.3 and Cor.\ 5.7]{CGG:Entropy},
\begin{equation}
\label{eq:vol-growth2}
\hbar(\varphi; L_0, L)\leq \hvol(\varphi; L_0, U) 
\end{equation}
for every open set $U\supset L$. This proves the first part of
\eqref{eq:vol-chord}.

\medskip\noindent\emph{Step 2: From volume growth to separated chords
  to metric entropy.} Our next goal is to prove the second inequality
in \eqref{eq:vol-chord}. This step of the proof is completely general,
and it is sufficient to assume that $L_0$ and $L_1$ are two closed
submanifolds of $M$ and $\varphi\colon M\to M$ is a compactly supported
$C^\infty$-smooth diffeomorphism. The $C^\infty$-smoothness
requirement is essential.

\begin{Proposition}
  \label{prop:vol-chord}
  There exists a $U$-approximate chord measure $\mu$ associated
    with $L_0$ and $L$ such that
  $$
  \hvol(\varphi; L_0, U)\leq \hh_\mu(\varphi).
  $$
  As a consequence,
  $$
  \hvol(\varphi; L_0, L)\leq \hh_\mu(\varphi)
  $$
  for some pseudo-chord measure $\mu$ associated with $L_0$ and $L$.
\end{Proposition}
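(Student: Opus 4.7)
The plan is to combine (the proof of) Yomdin's theorem with the classical Misiurewicz construction of an invariant measure from $(n,\epsilon)$-separated orbits. No symplectic input is needed at this stage: $L_0$ and $L$ may be any two closed submanifolds of $M$ and $\varphi$ any compactly supported $C^\infty$ diffeomorphism of $M$.

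First, I would pick a sequence $k_j \to \infty$ realizing the $\limsup$ defining $\hvol(\varphi; L_0, U)$, so that $k_j^{-1}\log^+ \vol(L_{k_j}\cap U) \to h := \hvol(\varphi; L_0, U)$. The goal is to use Yomdin's theorem---more precisely, the $C^r$-reparametrization/covering lemma from \cite{Yo} with $r \to \infty$---to produce, for each sufficiently small $\epsilon > 0$, a $(k_j,\epsilon)$-separated set $E_j(\epsilon) \subset L_0$ consisting of starting points of $U$-approximate $k_j$-chords, with
\[
\limsup_j \frac{\log |E_j(\epsilon)|}{k_j} \;\geq\; h - \alpha(\epsilon), \qquad \alpha(\epsilon) \to 0 \text{ as } \epsilon \to 0.
\]
The combinatorial content is that $L_{k_j} \cap U$ cannot be covered by appreciably fewer than $\vol(L_{k_j}\cap U)$ Bowen $(k_j,\epsilon)$-balls (up to a subexponential factor that is erased in the $C^\infty$ limit), so a maximal $(k_j,\epsilon)$-separated set of starting points in $L_0$ of such approximate chords must have at least this cardinality.

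Next, for each $j$ I would form the empirical measure $\delta_{\Gamma_j}$ attached to $\Gamma_j := \{\CO_{k_j}(x) : x \in E_j(\epsilon)\}$ as in \eqref{eq:delta_O}--\eqref{eq:delta_Gamma}, and extract a weak$^*$ limit $\mu^\epsilon$ along a subsequence in $j$. By construction $\mu^\epsilon$ is a $\varphi$-invariant $U$-approximate chord measure associated with $L_0$ and $L$. Writing $\sigma_j := |E_j(\epsilon)|^{-1}\sum_{x\in E_j(\epsilon)}\delta_x$, the identity $\delta_{\Gamma_j} = (k_j+1)^{-1}\sum_{i=0}^{k_j}\varphi^i_*\sigma_j$ is exactly the Misiurewicz time-averaging setup for the lower bound of the variational principle. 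Selecting a finite measurable partition $\xi$ of diameter $<\epsilon/4$ with $\mu^\epsilon$-null boundary, the $(k_j,\epsilon)$-separation of $E_j(\epsilon)$ forces $H_{\sigma_j}(\xi^{(k_j)}) = \log|E_j(\epsilon)|$, and concavity plus the subadditive estimate used in the standard proof of the variational principle (e.g.\ Walters, \emph{An Introduction to Ergodic Theory}, Thm.~8.6) yield $\hh_{\mu^\epsilon}(\varphi) \geq \limsup_j k_j^{-1}\log|E_j(\epsilon)| \geq h - \alpha(\epsilon)$.

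To finish, I would take $\epsilon_n \to 0$ and extract a weak$^*$ limit $\mu$ of the $\mu^{\epsilon_n}$; a diagonal argument over the underlying approximate chords shows $\mu$ is still a $U$-approximate chord measure, and Newhouse's upper semi-continuity of $\nu \mapsto \hh_\nu(\varphi)$ for $C^\infty$ diffeomorphisms \cite{Ne} gives $\hh_\mu(\varphi) \geq h$. The consequence for pseudo-chord measures then follows by applying the same procedure to a nested sequence $U_\ell \supset L$ with $\cap_\ell U_\ell = L$, taking one more weak$^*$ limit, and invoking upper semi-continuity together with the monotonicity $\hvol(\varphi; L_0, U_\ell) \downarrow \hvol(\varphi; L_0, L)$. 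The hard part is the first step: Yomdin's theorem is usually quoted only as the scalar inequality $\hvol \leq \htop$, so I need to reopen its proof to produce $(k_j,\epsilon)$-separated \emph{approximate chords} satisfying the boundary conditions $x \in L_0$ and $\varphi^{k_j}(x) \in U$, with exponential count essentially matching the volume of $L_{k_j} \cap U$. The subsequent Misiurewicz/Newhouse steps are then by now classical.
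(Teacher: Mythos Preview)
Your proposal is correct and follows essentially the same route as the paper: Yomdin's local volume bound \cite[Thm.~1.8]{Yo} is used to show that a maximal $(k,\eta)$-separated set of initial points of $U$-approximate $k$-chords has cardinality growing at rate at least $\hvol(\varphi;L_0,U)$, and then the Misiurewicz construction (which the paper cites as \cite[Lemma~4.5.2]{KH}) converts this into the metric-entropy lower bound for a weak$^*$ limit measure. The only cosmetic difference is that the paper closes with a bare diagonal process in $(a,\eta)$ rather than explicitly invoking Newhouse's upper semicontinuity at this stage.
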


Theorem \ref{thm:vol-chord} immediately follows from
\eqref{eq:vol-growth2}, establishing the first inequality in
\eqref{eq:vol-chord}, and Proposition \ref{prop:vol-chord} proving the
second.

\begin{proof}
Set
$$
d_k(x,y):=\max\{ d\big(\varphi^i(x), \varphi^i(x)\big)\mid 0 \leq i\leq
k\}.
$$
We say that two $k$-orbits
$$
\CO_k(x):=\{x,\varphi(x),\ldots,\varphi^k(x)\}
\textrm{ and }
\CO_k(y):=\{y,\varphi(y),\ldots,\varphi^k(y)\}
$$
are $\eta$-$d_k$-separated if $d_k(x,y)>\eta$ or, in other words,
$d\big(\varphi^i(x), \varphi^i(y)\big)>\eta$ for some $0\leq i\leq
k$. Let $\CS_k(\eta)$ be a maximal collection of
$\eta$-$d_k$-separated points $x\in L_0$ with $\varphi^k(x)\in
U$. This set is naturally in one-to-one correspondence with the set of
$\eta$-$d_k$-separated chords from $L_0$ to $U$ where we identify an
approximate chord with its initial condition. Note that while this
collection might not be unique, the cardinality $|\CS_k(\eta)|$ is
well defined. The key to the proof is the following lemma, which is
ultimately a consequence of the ``local Yomdin theorem''; see
\cite[Thm.\ 1.8]{Yo}.

\begin{Lemma}
  \label{lemma:separated}
  Pick any constant $0<a<\hvol(\varphi,L_0,U)$. Then, when $\eta>0$ is
  sufficiently small,
  $$
  a \leq \limsup_{k\to\infty}\frac{\log^+|\CS_{k}(\eta)|}{k}.
  $$
\end{Lemma}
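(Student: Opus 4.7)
The plan is to deduce Lemma~\ref{lemma:separated} from the local form of Yomdin's theorem \cite[Thm.\ 1.8]{Yo}, which controls how volume can inflate under iteration of a $C^\infty$-map; this is the same ingredient underlying the bound $\hbar(\varphi;L_0,L) \le \hvol(\varphi;L_0,U)$ of Step~1, now deployed in a complementary direction. Fix $\eta > 0$ and let $\{x_1,\ldots,x_N\}$ be a maximal $\eta$-$d_k$-separated collection with $\varphi^k(x_i) \in U$, so $N = |\CS_k(\eta)|$. By maximality of this collection, any $x \in L_0$ with $\varphi^k(x) \in U$ satisfies $d_k(x,x_i) \le \eta$ for some $i$; evaluating the defining inequality at the last time $j = k$ yields $\varphi^k(x) \in B(\varphi^k(x_i),\eta)$. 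This gives the covering
\[
  L_k \cap U \;\subset\; \bigcup_{i=1}^N \varphi^k\bigl(B_{d_k}(x_i,\eta)\cap L_0\bigr).
\]

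The heart of the argument is to bound the $(\dim L_0)$-dimensional volume of each image piece. Here I would invoke the local Yomdin theorem, applied to the embedding $\varphi^k|_{L_0}$ restricted to the dynamical ball $B_{d_k}(x_i,\eta) \cap L_0$. For $\varphi$ of class $C^r$ with $r \ge 1$, this should yield, for $\eta$ below a threshold $\eta_0 = \eta_0(r)$, an estimate of the form
\[
  \vol\bigl(\varphi^k(B_{d_k}(x_i,\eta)\cap L_0)\bigr) \;\le\; C_r\,\eta^{\dim L_0}\, e^{\delta(r)\,k},
\]
where $\delta(r) \to 0$ as $r \to \infty$---the crucial use of the $C^\infty$-hypothesis on $\varphi$. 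Summing over $i$ then gives $\vol(L_k \cap U) \le N \cdot C_r\,\eta^{\dim L_0}\, e^{\delta(r) k}$.

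Rearranging, taking $\log^+$, dividing by $k$, and passing to $\limsup_{k \to \infty}$ (the contributions $\log C_r$ and $(\dim L_0)\log(1/\eta)$ are $o(k)$) would give
\[
  \limsup_{k \to \infty}\frac{\log^+ |\CS_k(\eta)|}{k} \;\ge\; \hvol(\varphi;L_0,U) - \delta(r).
\]
Given $a < \hvol(\varphi;L_0,U)$, the plan is to first choose $r$ large enough that $\delta(r) < \hvol(\varphi;L_0,U) - a$, and then fix any $\eta < \eta_0(r)$ so that the Yomdin estimate applies. The conclusion of the lemma follows.

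The main obstacle is extracting precisely the form of the local Yomdin estimate used above: Yomdin's theorem is classically formulated in terms of $\epsilon$-covering numbers of images of standard cubes, whereas our domains are dynamical balls in $L_0$ whose internal geometry is not a priori controlled. The passage between these formulations is standard and relies on the fact that the $C^r$-norm of $\varphi^k$ grows at most exponentially in $k$, which Yomdin's theorem then converts into the subexponential factor $e^{\delta(r)k}$ with $\delta(r) \to 0$ as $r \to \infty$. The bookkeeping of the implicit constants and of the way the threshold $\eta_0(r)$ depends on $k$ follows the same pattern as in \cite{CGG:Entropy} and in Newhouse's foundational work on volume growth and entropy.
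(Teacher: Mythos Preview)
Your proposal is correct and follows essentially the same approach as the paper's proof: cover $L_k\cap U$ by the $\varphi^k$-images of $d_k$-balls around a maximal separated set, then invoke the local Yomdin theorem \cite[Thm.\ 1.8]{Yo} to control the volume of each piece. The only cosmetic difference is that the paper packages Yomdin's input as the single statement $\lim_{\eta'\to 0+}\limsup_{k\to\infty}\sup_x E_k(x,\eta')=0$ for $C^\infty$ maps (with $\eta'>\eta$ for the covering radius), whereas you unwind it through the $C^r$ estimate $e^{\delta(r)k}$ with $\delta(r)\to 0$ and then let $r\to\infty$; these are equivalent formulations of the same result.
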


\begin{proof}
  Denote by $B_k(x,\eta)$ the $d_k$-ball of radius $\eta$ centered at
  $x$. Choose a maximal collection $S_k(\eta)$ and fix $\eta'>\eta$,
  e.g., $\eta'=2\eta$. Then
$$
L_k\cap U\subset \bigcup_{x\in \CS_k(\eta)}
\varphi^k\big(B_k(x,\eta')\big).
$$
Thus
\begin{equation}
  \label{eq:vol-upper-bound}
\vol(L_k\cap U)\leq |\CS_k(\eta)|\cdot\max_{x\in
  \CS_k(\eta)}\vol\big(\varphi^k(L_0\cap B_k(x,\eta'))\big).
\end{equation}
Set
$$
E_k(x,\eta'):=\frac{\log^+ \vol\big(\varphi^k(L_0\cap
  B_k(x,\eta'))\big)}{k}.
$$
Then, by \eqref{eq:vol-upper-bound},
$$
\frac{\log^+ \vol(L_k\cap U)}{k}\leq \frac{\log^+ |\CS_k(\eta)|}{k}+ \max_{x\in
  \CS_k(\eta)} E_k(x,\eta').
$$
By \cite[Thm.\ 1.8]{Yo},
\begin{equation}
  \label{eq:Yomdin}
\lim_{\eta'\to 0+}\limsup_{k\to\infty}\sup_{x\in L_0} E_k(x,\eta')=0
\end{equation}
whenever $\varphi$ is $C^\infty$-smooth.
The lemma follows from \eqref{eq:Yomdin}.
\end{proof}

In the setting of Lemma \ref{lemma:separated}, pick a sequence
$k_j\to \infty$ such that
\begin{equation}
  \label{eq:vol-growth}
a \leq \lim_{k_j\to\infty}\frac{\log^+|\CS_{k_j}(\eta)|}{k_j}.
\end{equation}
Let $\Gamma_{k_j}$ be the set of $U$-approximate $k_j$-chords with
initial conditions $\CS_{k_j}(\eta)$. After passing to a subsequence,
we can assume that $\delta_{k_j}\to \mu$ (in the weak$^*$ topology)
for some $U$-approximate chord measure $\mu$.  Clearly,
$|\Gamma_{k_j}|=|\CS_{k_j}(\eta)|$. Since $\CS_k(\eta)$ is an
$\eta$-$d_k$-separated set, by \eqref{eq:vol-growth} and \cite[Lemma
4.5.2]{KH},
$$
a\leq \lim_{k_j\to \infty} \frac{\log^+ |\CS_{k_j}(\eta)|}{k_j}\leq
\hh_\mu(\varphi).
$$
Finally, by sending $a\to \hvol(\varphi; L_0,U)$ and passing to
  the limit again, we obtain a $U$-approximate chord measure $\mu$
  such that $\hvol(\varphi; L_0,U)\leq \hh_\mu(\varphi)$.

\end{proof}
This completes the proof of Theorem \ref{thm:vol-chord}.
\end{proof}

In the rest of this section we will elaborate on the notions of an
approximate periodic measure and a pseudo-periodic measure from
Section \ref{sec:Per}. Below we make no background symplectic
  topological assumptions on symplectic or Lagrangian manifolds. For
  instance, in Proposition \ref{prop:ergodic-apo} we do not claim
  that every symplectic manifold admits a closed monotone Lagrangian
  submanifold.

\begin{Proposition}
  \label{prop:ergodic-apo}
For any diffeomorphism  of a closed manifold or a flow every ergodic
measure is pseudo-periodic. Likewise, every ergodic measure is a Lagrangian
or Legendrian pseudo-chord measure for Hamiltonian diffeomorphisms or
Reeb flows for a suitable choice of Lagrangian or Legendrian submanifolds.
\end{Proposition}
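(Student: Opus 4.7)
The plan is to produce, for each ergodic measure $\mu$, a single generic recurrent point $x$ whose forward orbit both equidistributes to $\mu$ and returns arbitrarily close to itself, and then use that orbit as the approximate periodic or chord data realizing $\mu$ as a pseudo-periodic (resp.\ pseudo-chord) measure. Concretely, I would first invoke the Birkhoff ergodic theorem to obtain the full $\mu$-measure set $X_B$ of Birkhoff-generic points, i.e.\ points $x$ for which $\frac{1}{k+1}\sum_{i=0}^{k} f(\varphi^i(x)) \to \int f\,d\mu$ for every continuous $f$, equivalently $\delta_{\CO_k(x)} \to \mu$ in the weak$^*$ topology. Next, Poincar\'e recurrence gives the full $\mu$-measure set $X_R$ of recurrent points. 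I then fix any $x \in X_B \cap X_R$, which exists since the intersection has full $\mu$-measure.

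For the pseudo-periodic case of a $C^\infty$ diffeomorphism on a closed manifold (or a flow), fix an arbitrary sequence $\eta_\ell \to 0+$. Recurrence of $x$ yields return times $k_\ell \to \infty$ with $d(x, \varphi^{k_\ell}(x)) < \eta_\ell$, i.e.\ $(x, \varphi^{k_\ell}(x)) \in U_{\eta_\ell}$ in the notation of Section \ref{sec:Per}. Taking the singleton collection $\Gamma_\ell := \{\CO_{k_\ell}(x)\}$, the orbit $\CO_{k_\ell}(x)$ is an $\eta_\ell$-approximate $k_\ell$-periodic orbit, and because $x$ is Birkhoff-generic, along the subsequence one still has $\delta_{\Gamma_\ell} = \delta_{\CO_{k_\ell}(x)} \to \mu$. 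This exhibits $\mu$ as pseudo-periodic. The flow case is identical with $k$ replaced by continuous time $T_\ell$ and time averages over $[0, T_\ell]$.

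For the Lagrangian case, the additional task is to choose closed Lagrangian submanifolds $L_0, L \subset M$ making $\mu$ a pseudo-chord measure for $(L_0, L)$. The natural choice is $L_0 = L$ passing through $x$, which makes the Hamiltonian-isotopy requirement trivial; existence is provided by placing a small closed Lagrangian (e.g.\ a Lagrangian torus) through $x$ inside a Darboux chart around $x$. Recurrence yields $\varphi^{k_\ell}(x) \to x \in L$, so for any nested sequence of neighborhoods $U_\ell \supset L$ with $\bigcap_\ell U_\ell = L$ we have $\varphi^{k_\ell}(x) \in U_\ell$ eventually. Hence $\CO_{k_\ell}(x)$ is a $U_\ell$-approximate chord from $L_0$ to $L$, and $\delta_{\CO_{k_\ell}(x)} \to \mu$ realizes $\mu$ as a Lagrangian pseudo-chord measure. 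The Reeb/Legendrian version is analogous: take $L_0 = L$ to be a small Legendrian sphere in a contact Darboux chart around $x \in M = \p W$, with exact Lagrangian disk filling $\hL$ in an adjacent Weinstein model inside $W$.

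The dynamical mechanism converting Birkhoff genericity plus recurrence into approximate periodic/chord data is essentially routine. The one substantive point, and the main obstacle, is the symplectic/contact existence statement: producing a closed Lagrangian (resp.\ Legendrian admitting an exact filling) through an arbitrary point of $M$. For Hamiltonian diffeomorphisms this is immediate from Darboux's theorem and the abundance of small closed Lagrangians in $(\R^{2n}, \omega_{\mathrm{std}})$; for Reeb flows, the standard Legendrian sphere with its Lagrangian disk filling in a local Weinstein model suffices. Once these local models are in place no further Floer-theoretic input is needed, since the definition of pseudo-chord measure is purely dynamical.
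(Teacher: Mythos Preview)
Your proposal is correct and follows essentially the same argument as the paper: both combine Birkhoff's ergodic theorem and Poincar\'e recurrence to find a point $x$ whose orbit simultaneously equidistributes to $\mu$ and returns arbitrarily close to itself, then use singleton collections $\Gamma_\ell=\{\CO_{k_\ell}(x)\}$ as the approximate periodic (resp.\ chord) data, and finally pass any Lagrangian/Legendrian through $x$. Your write-up is slightly more detailed on the local construction of $L_0=L$ via Darboux charts and on the exact Lagrangian filling in the Legendrian case, points the paper leaves implicit.
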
  

\begin{proof}
  We will focus on the case of diffeomorphisms; for flows the argument
  is similar. Let $\mu$ be an ergodic measure. By Poincar\'e's Recurrence
  Theorem, $\mu$-a.e.\ point $x\in M$ is recurrent, i.e., for
  $\mu$-a.a.\ $x\in M$ the trajectory $\{\varphi^k(x)\mid k\in \N\}$
  gets arbitrarily close to $x$. Furthermore, for $\mu$-a.a.\
  $x\in M$ we have
$$
\mu=\lim_{k\to\infty}\frac{1}{k+1}\sum_{i=0}^k \delta_{\varphi^i(x)}
$$
due to Birkhoff's Ergodic Theorem. Taking $x$ in the intersection of
these two full measure sets, we see that $\mu$ is an
$\eta$-approximate periodic measure for any $\eta>0$. As a
consequence, $\mu$ is a pseudo-periodic measure.

For chord measures it now suffices to take any Lagrangian (Legendrian)
submanifolds $L_0$ and $L$ intersecting at $x$.
\end{proof}

Revisiting Remark \ref{rmk:POvsPPO}, recall that the limit of measures
$\delta_\Gamma$ is a \emph{periodic measure} when all orbits
comprising $\Gamma$ are periodic, but possibly have different
periods. Such measures are automatically pseudo-periodic as the
following simple observation shows.

\begin{Proposition}
\label{prop:pseudo-per}  
For any diffeomorphism of a closed manifold or a flow every periodic
measure is pseudo-periodic.
\end{Proposition}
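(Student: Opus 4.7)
The plan is, given a periodic measure $\mu=\lim_j\delta_{\Gamma_j}$ with $\Gamma_j=\{\CO_{T_{j1}}(x_{j1}),\ldots,\CO_{T_{jm_j}}(x_{jm_j})\}$ a collection of truly periodic orbits of possibly distinct periods $T_{ji}$, to realize $\mu$ as a pseudo-periodic measure by replacing each periodic orbit by a much longer orbit of a common length $k_j\to\infty$ which still closes up, exactly in the diffeomorphism case and to within $\eta_j\to 0+$ in the flow case. Since the time average over the extended orbit is essentially the periodic orbit measure, this replacement preserves $\delta_{\Gamma_j}$ in the limit.

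In the diffeomorphism case each $T_{ji}\in\N$, and I would set $k_j:=N_j\cdot\mathrm{lcm}(T_{j1},\ldots,T_{jm_j})$ for integers $N_j\to\infty$, and take $\Gamma'_j:=\{\CO_{k_j}(x_{j1}),\ldots,\CO_{k_j}(x_{jm_j})\}$. Since $T_{ji}\mid k_j$, one has $\varphi^{k_j}(x_{ji})=x_{ji}$ exactly, so each $\CO_{k_j}(x_{ji})$ is a $0$-approximate $k_j$-periodic orbit, hence $\eta_j$-approximate for any chosen sequence $\eta_j\to 0+$. A direct count of multiplicities shows that $\delta_{\CO_{k_j}(x_{ji})}$ differs from the uniform probability measure on the $T_{ji}$-orbit by at most $O(1/k_j)$ in total variation, uniformly in $i$; averaging over $i$ then gives $\|\delta_{\Gamma'_j}-\delta_{\Gamma_j}\|\to 0$, so $\delta_{\Gamma'_j}\to\mu$ and $\mu$ is pseudo-periodic.

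For flows the LCM step has to be replaced by simultaneous Diophantine approximation. By Kronecker--Weyl equidistribution, given any finite family of periods $T_{j1},\ldots,T_{jm_j}>0$ and any $\varepsilon>0$, the set of $\tau>0$ for which $|\tau-n_{ji}T_{ji}|<\varepsilon$ for some integers $n_{ji}$, simultaneously in $i$, is unbounded. Choosing $\varepsilon_j\to 0+$ small relative to a local Lipschitz bound of $\varphi^s$ for $|s|\leq 1$, I pick $\tau_j\to\infty$ with $d(\varphi^{\tau_j}(x_{ji}),x_{ji})<\eta_j\to 0+$ for all $i$; then $\CO_{\tau_j}(x_{ji})$ is $\eta_j$-approximate $\tau_j$-periodic, and the analogous time-averaging estimate gives $\delta_{\Gamma'_j}\to\mu$. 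The only mildly non-obvious step is this Diophantine synchronization of incommensurable periods; the diffeomorphism case is essentially immediate from the LCM construction.
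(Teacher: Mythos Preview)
Your proposal is correct and follows essentially the same route as the paper: the LCM trick in the discrete case and simultaneous Diophantine approximation in the flow case. Your version is in fact a bit more careful than the paper's, which asserts $\delta_{\Gamma'_j}=\delta_{\Gamma_j}$ outright and does not insert the extra factor $N_j\to\infty$; your $N_j$ guarantees $k_j\to\infty$ even when the periods in $\Gamma_j$ stay bounded, and your $O(1/k_j)$ total-variation estimate handles the slight mismatch coming from the $(k{+}1)$-normalization in \eqref{eq:delta_O}.
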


\begin{proof}
  Let $\mu$ be a periodic measure. We start with the discrete case. By
  definition, there exists a sequence $\Gamma_j$ such that
  $\delta_{\Gamma_j}\to \mu$ and each $\Gamma_j$ is a finite
  collection of periodic orbits $\CO_{k_{ij}}(x_{ij})$. Let $k_j$ be
  the least common multiple of all periods $k_{ij}$ and $\Gamma'_j$
  comprise the periodic orbits $\CO_{k_j}(x_{ij})$ of the same period
  $k_j$. Then $\Gamma'_j$ is a collection of $\eta$-approximate
  periodic orbits in the sense of Section \ref{sec:Per} for any
  $\eta>0$, and $\delta_{\Gamma'_j}=\delta_{\Gamma_j}$.  Hence,
    $\mu =\lim \delta_{\Gamma'_j}$ is a genuine periodic measure in
    the sense of Remark \ref{rmk:POvsPPO} and, in particular, a
    pseudo-periodic measure.

  For continuous time the argument is slightly different with the
  exact equality of measures and periods replaced by
  approximations. Now we have $\delta_{\Gamma_j}\to \mu$ where each
  $\Gamma_j$ is a finite collection of periodic orbits
  $\CO_{T_{ij}}(x_{ij})$. For every $\eps>0$, there exists an
  arbitrarily large $T_j$ such that $|T_j-k_{ij}T_{ij}|<\eps$ for some
  integers $k_{ij}>0$. Let $\Gamma'_j$ be the multiset of the orbits
  $\CO_{T_{j}}(x_{ij})$. Note that we can make the norm of the
  difference $\delta_{\Gamma_j}-\delta_{\Gamma'_j}$ in the dual of
  $C^0(M)$ arbitrarily small by making $\eps>0$ small. Likewise, given
  $\eta>0$, the orbits $\CO_{T_{j}}(x_{ij})$ are $\eta$-approximately
  periodic when $\eps$ is small. Thus for any sequence
  $\eta_\ell\to 0+$ we can find a sequence $\Gamma'_\ell$ of
  collections of $\eta_\ell$-approximately periodic orbits such that
  we still have $\mu=\lim \delta_{\Gamma'_\ell}$.
\end{proof}

\subsection{On the proof of Theorem \ref{thm:Reeb-hbar-chord}}
\label{sec:proof-hbar-hmu-Reeb}
For continuous time too, it is convenient to prove a slightly more
precise version of the theorem by inserting the volume growth entropy
as an intermediate term.  The argument follows the same line of
reasoning as the proof in the previous section of Theorem
\ref{thm:main-hbar-hmu} and, moreover, the theorem can be obtained as
a consequence of that proof.

Let us start with preliminary definitions. In the setting of Section
\ref{sec:Reeb}, fix a neighborhood $U$ of $L$ and set
$$
V(s, U):=\vol(L_s\cap U),
$$
where $L_s:=\varphi^s(L_0)$.
We define the volume growth entropy of
a flow $\varphi$ by
\begin{equation}
  \label{eq:vol1-flow}
\hvol(\varphi; L_0, U) :=\limsup_{s\to\infty}\frac{\log^+ V(s,U)}{s}
\end{equation}
and
\begin{equation}
  \label{eq:vol2-flow}
  \hvol(\varphi; L_0,L)
  :=\lim_{U \to L}\hvol(\varphi; L_0, U)
  =\inf_{U\supset L}  \hvol(\varphi; L_0, U).
\end{equation}

Our next goal is to connect this definition with the definition of the
volume growth entropy for diffeomorphisms from the previous section. To
this end, we first note that while up to this point the
difference between maps and flows has always been clear from the
context, it is not necessarily so below. Hence, in what follows
$\varphi$ refers to the flow while its time-$T$ map will be denoted
by $\varphi^T$.

It is easy to see that for any $T>0$ and any $U\supset L$, we have
\begin{equation}
  \label{eq: vol-flow-map1}
T^{-1}\hvol(\varphi^T;L_0, U)\leq \hvol(\varphi;L_0, U).
\end{equation}
Here the volume growth entropy of the flow $\varphi$ is understood as
in \eqref{eq:vol1-flow} and \eqref{eq:vol2-flow}, while the volume
growth entropy of the time-$T$ map $\varphi^T$ is defined in
\eqref{eq:hvol1} and \eqref{eq:hvol2}.  The opposite inequality, given
by the following lemma, is more subtle.

\begin{Lemma}
  \label{lemma:vol-flow-map}
  Assume that the closure of an open set $U'\supset L$ is contained in
  $U$. Then, whenever $T>0$ is sufficiently small (depending on $U'$
  and $U$), we have
  \begin{equation}
  \label{eq:vol-flow-map2}
  \hvol(\varphi; L_0, U')\leq T^{-1}\hvol(\varphi^T; L_0, U).
\end{equation}
\end{Lemma}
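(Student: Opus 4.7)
The plan is to compare continuous time volumes $V(s,U')$ with the discrete time volumes $V(kT,U)$ by decomposing $s=kT+\tau$ with $\tau\in[0,T)$ and using that, for small enough $T$, the short time flow only pushes $U'$ a little and therefore stays inside $U$. The main technical point will be a uniform Jacobian bound coming from compactness.

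First I would choose $T>0$. Since $\bar U'\subset U$ is compact, $U$ is open, $\varphi^0=\id$, and the flow is continuous, for each $x\in\bar U'$ there is a neighborhood $V_x$ of $x$ and a time $T_x>0$ such that $\varphi^{-\tau}(V_x)\subset U$ for all $\tau\in[0,T_x]$. A finite subcover of $\bar U'$ produces a uniform $T>0$ with
\[
\varphi^{-\tau}(U')\subset U\quad\text{for all }\tau\in[0,T].
\]
Throughout I use that $M$ is compact (we are in the Reeb setting of Section \ref{sec:Reeb}).

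Second, for $s\geq 0$ write $s=kT+\tau$ with $k=\lfloor s/T\rfloor$ and $\tau\in[0,T)$. Then $L_s=\varphi^\tau(L_{kT})$, and a point $p=\varphi^\tau(q)$ lies in $U'$ only if $q=\varphi^{-\tau}(p)\in\varphi^{-\tau}(U')\subset U$. Therefore
\[
L_s\cap U'=\varphi^\tau\bigl(L_{kT}\cap\varphi^{-\tau}(U')\bigr)\subset\varphi^\tau\bigl(L_{kT}\cap U\bigr).
\]
Because $M$ is compact and $\varphi$ is smooth, the constant
\[
C:=\sup_{\tau\in[0,T],\,x\in M}\bigl\|d\varphi^\tau_x\bigr\|^{n},\qquad n:=\dim L_0,
\]
is finite, and comparing Riemannian volumes on the submanifold $L_{kT}$ and its image under $\varphi^\tau$ gives
\[
V(s,U')=\vol(L_s\cap U')\leq C\cdot\vol(L_{kT}\cap U)=C\cdot V(kT,U).
\]

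Third, I take logarithms, divide by $s$, and pass to the limsup. Since $k=\lfloor s/T\rfloor$ satisfies $kT\leq s<(k+1)T$, we have $s\to\infty$ if and only if $k\to\infty$, with $s/(kT)\to 1$. Therefore
\[
\hvol(\varphi;L_0,U')=\limsup_{s\to\infty}\frac{\log^+V(s,U')}{s}
\leq\limsup_{k\to\infty}\frac{\log C+\log^+V(kT,U)}{kT}
=T^{-1}\hvol(\varphi^T;L_0,U),
\]
which is the desired inequality \eqref{eq:vol-flow-map2}.

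The only non-routine step is the first one: choosing $T$ uniformly so that $\varphi^{-\tau}(U')\subset U$ for $\tau\in[0,T]$. Once that is in place, the Jacobian bound $C$ is standard, and the rest is a division with the rounding error $\tau/s\to 0$ being harmless in the $\limsup$. Note that it is crucial that $\bar U'$ (and not just $U'$) sits inside $U$: this strict nesting is exactly what makes the uniform choice of $T$ possible.
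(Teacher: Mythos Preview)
Your proof is correct and follows essentially the same approach as the paper's: choose $T$ small enough that the short-time flow keeps $U'$ inside $U$, write $s=kT+\tau$, bound $V(s,U')\leq C\cdot V(kT,U)$ via a uniform Jacobian estimate, and pass to the $\limsup$. The paper is terser---it picks a sequence $s_j$ realizing the $\limsup$ and states the bound $V(s_j,U')\leq C\cdot V(k_jT,U)$ without spelling out the Jacobian constant or the compactness argument for the choice of $T$---but the content is the same.
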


\begin{Remark}
  In Lemma \ref{lemma:vol-flow-map}, the smaller the gap between $U'$
  and $U$, the smaller the required value of $T$ is. We do not know if
  $T$ can be chosen independent of the gap and if we can have
  $\hvol(\varphi; L_0, U)= T^{-1}\hvol(\varphi^T; L_0, U)$ and
  $\hvol(\varphi; L_0, L)= T^{-1}\hvol(\varphi^T; L_0, L)$ for some
  specific value of $T$. The underlying reason is the lack
  of any clear-cut continuity of $\hvol$ with respect to $U$.
\end{Remark}  

\begin{proof}
  Let $T>0$ be so small that $\varphi^\tau(U')\subset U$ whenever
  $|\tau|\leq T$. Pick a sequence $s_j\to \infty$ such that
$$
\lim_{j\to\infty}\frac{\log^+ V(s_j,U')}{s_j}=\hvol(\varphi;U').
$$
Then for some $k_j\in\N$, we have $s_j=k_j T+\tau_j$ where
$|\tau_j|\leq T$. Hence,
$$
V(s_j,U')\leq C\cdot V(k_jT,U)
$$
for some constant $C>0$ independent of $s_j$, and
\eqref{eq:vol-flow-map2} follows.
\end{proof}

Let now $M=\p W$ be the boundary of a Liouville domain $W$ and
$\varphi^t$ the Reeb flow on $M$.  Furthermore, as in Section
\ref{sec:barcode}, let $L_0$ and $L$ be closed Legendrian submanifolds
with exact compact Lagrangian fillings to $W$. Fix an open set
$U\supset L$ in $M$.

\begin{Theorem}
  \label{thm:Reeb-hbar-chord2}
  In the setting of Theorem \ref{thm:Reeb-hbar-chord}, for every
  neighborhood $U'\supset L$ whose closure is contained in $U$, there
  exists a $U$-approximate chord measure $\mu$ associated with $L_0$
  and $L$ such that
  \begin{equation}
    \label{eq:Reeb-hbar-chord2}
\hbar(\varphi; L_0,L)\leq \hvol(\varphi;L_0,U')\leq\hh_\mu(\varphi).
\end{equation}
Hence, there is also a pseudo-chord measure $\mu$, again associated with
$L_0$ and $L$, with
$$
\hbar(\varphi; L_0,L)\leq \hvol(\varphi;L_0,L)\leq\hh_\mu(\varphi).
$$
\end{Theorem}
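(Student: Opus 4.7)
The plan is to follow the two-step structure of the proof of Theorem \ref{thm:vol-chord} and to bridge between the Reeb flow $\varphi$ and its smooth time-$T$ map $\varphi^T$ using Lemma \ref{lemma:vol-flow-map} and Abramov's formula. The first inequality $\hbar(\varphi;L_0,L)\leq \hvol(\varphi;L_0,U')$ is the Reeb analogue of \cite[Cor.\ 5.7]{CGG:Entropy}; it is the ingredient driving the proofs of the Reeb versions of \eqref{eq:hbar<htop-rel} and \eqref{eq:hbar<htop} in \cite{Fe1, CGGM:Entropy, FLS} and can be quoted verbatim.

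For the second inequality, choose $T>0$ so small that $\varphi^\tau(U')\subset U$ for every $|\tau|\leq T$. Lemma \ref{lemma:vol-flow-map} then yields $\hvol(\varphi;L_0,U')\leq T^{-1}\hvol(\varphi^T;L_0,U)$, and Proposition \ref{prop:vol-chord} applied to the $C^\infty$ diffeomorphism $\varphi^T$ produces a discrete $U$-approximate chord measure $\nu$ (associated with $L_0$, $L$, and $\varphi^T$) with $\hvol(\varphi^T;L_0,U)\leq \hh_\nu(\varphi^T)$. By construction $\nu=\lim_j \delta_{\Gamma_j}$ where each $\Gamma_j$ is a collection of discrete $k_j$-orbits $\{x_{ij},\varphi^T(x_{ij}),\ldots,\varphi^{k_jT}(x_{ij})\}$ with $x_{ij}\in L_0$ and $\varphi^{k_jT}(x_{ij})\in U$. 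Promote each such discrete orbit to the continuous Reeb orbit segment $\CO_{k_jT}(x_{ij})$ and replace each point-mass average by the corresponding time-integral measure $\mu_j$; on a common subsequence $\mu_j$ converges in weak$^*$ to a $\varphi$-invariant $U$-approximate chord measure $\mu$ for the flow, which a direct Fubini-type computation identifies with the flow-smearing $\mu = T^{-1}\int_0^T (\varphi^t)_*\nu\,dt$. Since each $(\varphi^t)_*\nu$ is $\varphi^T$-invariant with the same metric entropy as $\nu$ (by equivariance, the flow commutes with $\varphi^T$), affinity of metric entropy in the measure gives $\hh_\mu(\varphi^T)=\hh_\nu(\varphi^T)$, and Abramov's formula applied to the $\varphi$-invariant measure $\mu$ gives $\hh_\mu(\varphi^T)=T\hh_\mu(\varphi)$. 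Chaining the inequalities yields \eqref{eq:Reeb-hbar-chord2}.

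For the pseudo-chord consequence, apply the above to a nested sequence of pairs $U'_\ell \Subset U_\ell$ both shrinking to $L$, producing $U_\ell$-approximate chord measures $\mu_\ell$ with $\hvol(\varphi;L_0,L)\leq\hh_{\mu_\ell}(\varphi)$. By weak$^*$ compactness a subsequence of $\mu_\ell$ converges to a pseudo-chord measure $\mu$ associated with $L_0$ and $L$, and upper semi-continuity of $\mu'\mapsto\hh_{\mu'}(\varphi)$ for $C^\infty$ flows (\cite{Ne}) preserves the bound in the limit.

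The most delicate point is the entropy identity $\hh_\nu(\varphi^T)=T\hh_\mu(\varphi)$: it rests on $\nu$ being $\varphi^T$-invariant (as a weak$^*$ limit of averages along $\varphi^T$-orbits), on the identification $\mu = T^{-1}\int_0^T(\varphi^t)_*\nu\,dt$, and on controlling the boundary contributions when comparing the discrete sums defining the approximants of $\nu$ with the time-integrals defining those of $\mu$ as $k_j\to\infty$. An arguably cleaner alternative, avoiding this reduction altogether, would be to reprove Proposition \ref{prop:vol-chord} directly in continuous time, defining separated sets via the Bowen distance $d_T(x,y)=\max_{0\leq t\leq T} d(\varphi^t(x),\varphi^t(y))$, constructing $\mu$ from separated continuous chord segments, and invoking the flow version of \cite[Lemma 4.5.2]{KH} together with Yomdin's theorem applied to the time-one map.
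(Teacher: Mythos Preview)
Your proof follows the same overall architecture as the paper's: quote \cite{Fe1} for the first inequality, then reduce the second to Proposition~\ref{prop:vol-chord} for the time-$T$ map via Lemma~\ref{lemma:vol-flow-map}, and finally convert $\varphi^T$-entropy to flow-entropy. The paper's own proof is terser at the conversion step: it applies Proposition~\ref{prop:vol-chord} to $\varphi^T$ to obtain a measure $\mu$ and simply asserts that this $\mu$ is ``automatically invariant under the flow $\varphi$ and also, by construction, a $U$-approximate chord measure for the flow,'' then uses $\hh_\mu(\varphi^T)=T\hh_\mu(\varphi)$ directly. You instead distinguish the discrete $\varphi^T$-chord measure $\nu$ from its flow-smearing $\mu=T^{-1}\int_0^T(\varphi^t)_*\nu\,dt$, verify that $\mu$ arises as a limit of continuous chord-segment measures, and argue $\hh_\mu(\varphi^T)=\hh_\nu(\varphi^T)$ via conjugacy and affinity before applying Abramov. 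This is more explicit than the paper's treatment and arguably fills in what the paper's one-line assertion leaves to the reader; note that for the chain of inequalities you only need $\hh_\mu(\varphi^T)\geq\hh_\nu(\varphi^T)$, which follows already from finite affinity on Riemann-sum approximations together with upper semicontinuity (\cite{Ne}), so the integral affinity you flag as delicate can be sidestepped. Your suggested alternative of redoing Proposition~\ref{prop:vol-chord} directly in continuous time is also viable and would bypass the discrete-to-continuous conversion entirely.
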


\begin{proof} The first inequality in \eqref{eq:Reeb-hbar-chord2} is
  an analogue of \eqref{eq:vol-growth2} for Reeb flows; it is
  established in \cite{Fe1}.  Hence we only need to prove the second
  inequality.

  Assume now that the closure of a neighborhood $U'\supset L$ is
  contained in $U$. Then, by Lemma \ref{lemma:vol-flow-map},
 $$
 \hvol(\varphi; L_0, U')\leq T^{-1}\hvol(\varphi^T; L_0, U)
 $$
 for all sufficiently small $T>0$. By Proposition \ref{prop:vol-chord}
 applied to $\varphi^T$ we have
  $$
  \hvol(\varphi^T; L_0, U)\leq \hh_\mu(\varphi^T)
  $$
  for some $U$-approximate $\varphi^T$-chord measure $\mu$. It is easy
  to see that the
  measure $\mu$ is automatically invariant under the flow $\varphi$
  and also, by construction, a $U$-approximate chord measure for the
  flow. Furthermore,
  $$
  \hh_\mu(\varphi^T)=T\hh_\mu(\varphi^1)=T\hh_\mu(\varphi).
  $$
  Combining the last two displayed formulas, we obtain the second
  inequality in \eqref{eq:Reeb-hbar-chord2}.
\end{proof}

Likewise, in the case of approximate periodic orbits, we fix a
neighborhood $U$ of the diagonal $\Delta$ in $M\times M$. For
instance, we can take $U=U_\eta$ to be the $\eta$-neighborhood of
$\Delta$ as in \eqref{eq:U_eta}. Denote by $G_s$ the graph of
$\varphi^s$ in $M\times M$ and set
$$
V(s,U):=\vol(G_s\cap U_\eta)
$$
Then the volume growth entropy, $\hvol(\varphi; U)$ and
$\hvol(\varphi)$, of the flow $\varphi$ is still defined by
\eqref{eq:vol1-flow} and \eqref{eq:vol2-flow}. The upper bound
\eqref{eq:vol-flow-map2} from Lemma \ref{lemma:vol-flow-map} takes
the form $\hvol(\varphi; U')\leq T^{-1}\hvol(\varphi^T; U)$.  Again we
have an analogue of Theorem \ref{thm:Reeb-hbar-chord2}, but it does
not contain new information as stated because every ergodic measure is
pseudo-periodic.  The first inequality in this version of the theorem
follows from the main result of \cite{FLS} (see also \cite
{CGGM:Entropy}) and the second one from the existence of maximal
entropy ergodic measures; \cite{Ne}.

\section{Examples}
\label{sec:examples}

In this section we discuss several examples further illustrating the
notion of a pseudo-chord measure.

\begin{Example}[Hyperbolic sets]
  \label{ex:Hyperbolic}
  Let $K$ be a locally maximal, topologically transitive compact
  hyperbolic set of a Hamiltonian diffeomorphism
  $\varphi\colon M\to M$. Assume that $L_0$ and $L$ intersect $K$ and
  that $L_0$ contains a ball centered at $p\in L_0\cap K$ in the
  unstable manifold of $K$ and $L$ contains a ball centered at
  $q\in L\cap K$ in the stable manifold.  Then, as shown in \cite{Me},
  $\htop(\varphi|_K)\leq \hbar(\varphi;L_0,L)$. Thus, we have
  \begin{equation}
    \label{eq:hyperbolic}
  \htop(\varphi|_K)\leq\hbar(\varphi;L_0,L)\leq \hh_\mu(\varphi)\leq
  \htop(\varphi),
\end{equation}
where all inequalities can be strict. A similar chain of inequalities
holds for Legendrian submanifolds and hyperbolic sets of Reeb flows
with the first inequality established in \cite{Fe2}.
\end{Example}

The next two examples concern with the cases where $\varphi$ is
globally hyperbolic and the pseudo-chord measure can be explicitly
identified as the maximal entropy measure.

\begin{Example}[Hyperbolic symplectomorphisms]
  \label{ex:Hyperbolic-tori}
  Let $\varphi\colon \T^{2}\to \T^{2}$ be a hyperbolic
  symplectomorphism. For instance, $\varphi$ itself can be a
  hyperbolic linear automorphism. Furthermore, let $L_0$ and $L$ be two
  embedded contractible circles in $\T^2$ bounding the same area. Thus
  $L_0$ and $L$ are Hamiltonian isotopic monotone Lagrangian
  submanifolds of $\T^{2}$.  Then $L_k:=\varphi^k(L_0)$ is still
  Hamiltonian isotopic to $L$. Hence the filtered Floer homology of
  the pair $(L,L_k)$, and hence the barcode, are defined. The results
  and constructions from \cite{CGG:Entropy,Me} and Section
  \ref{sec:Lagr} carry over to this case.

  Assume, in addition, that $L_0$ contains an arc in an unstable
  leaf of $\varphi$ and $L$ contains an arc in a stable
  leaf. Note that while stable/unstable foliations are only
  $C^0$-smooth, individual leafs are $C^\infty$. Then the results of
  \cite{Me} still apply with $K=\T^{2}$, and the inequalities in
  \eqref{eq:hyperbolic} turn into the equality
   \begin{equation}
    \label{eq:hyperbolic2}
 \hbar(\varphi;L_0,L)=\hh_\mu(\varphi)=\htop(\varphi).
\end{equation}
Hence, in this case, the pseudo-chord measure $\mu$ is the unique
maximal entropy measure, the Bowen measure; \cite[Sec.\ 20.1]{KH}. If
$\varphi$ is linear, $\mu$ is the normalized Lebesgue measure.  We do
not know what the measure $\mu$ is when the stable/unstable leaf
condition is dropped. (Note however that $L_0$ and $L$ are
automatically tangent to the unstable and stable foliations.)

We expect this example to generalize to hyperbolic symplectomorphisms
$\varphi\colon \T^{2n}\to \T^{2n}$ and Hamiltonian isotopic monotone
Lagrangian submanifolds $L_0$ and $L$ of $\T^{2n}$.  However, it is
not clear how to guarantee that then $L_k$ and $L$ are still
Hamiltonian isotopic even when $\varphi$ Hamiltonian isotopic to a
linear automorphism. Furthermore, when $2n\geq 4$, the tangency
condition is not generically satisfied for dimension reasons.
\end{Example}

\begin{Example}[Geodesic flows without conjugate points]
  \label{ex:Hyperbolic-geodesic}
  Let $Q$ be a closed manifold equipped with a metric $g$ without
  conjugate points and $\varphi$ be the geodesic flow on the unit
  (co)tangent bundle $M:=ST^*Q$. For instance, $g$ can be a metric of
  negative sectional curvature and then $\varphi$ is a hyperbolic (aka
  Anosov) Reeb flow.

  As $L_0$ and $L$ we take the unit spheres $ST^*_pM$ and $ST^*_qM$
  for two not necessarily distinct points $p$ and $q$ in $M$. These
  Legendrian submanifolds are nowhere tangent to the stable/unstable
  foliations of $\varphi$. As a consequence, even when $\varphi$ is
  hyperbolic, $L_0$ and $L_1$ do not meet the stable/unstable manifold
  condition from \cite{Fe2} alluded to in Example \ref{ex:Hyperbolic}
  for the hyperbolic set $M$. Thus we cannot use that criterion to
  show that
  $$
  \hbar(\varphi;L_0,L)=\htop(\varphi).
  $$
  However, this equality still holds.

  To see this, recall that the wrapped Floer homology persistence
  module for the pair $(L_0,L)$ is isomorphic to the Morse homology
  persistence module for the space of paths connecting $p$ and $q$
  filtered by length; \cite{AS}. (In fact, in this setting we could
  have directly used the Morse-theoretic version of barcode entropy
  from \cite{GGM}.) The latter Morse complex does not have finite bars
  and $\fb_\eps(s;L_0,L)=\fb_\infty(s;L_0,L)$ is simply the number of
  geodesics of length below $s$ connecting $p$ and $q$. By \cite{Ma},
  the exponential growth rate of this number as a function of $s$ is
  equal to $\htop(\varphi)$. (The condition that $g$ has no conjugate
  points is sufficient here.)

  Therefore, the analogue of \eqref{eq:hyperbolic2} holds in this case
  for some pseudo-chord measure $\mu$, which is then a measure of maximal
  entropy. The measure is unique when $\varphi$ is hyperbolic, and
  this is the Bowen--Margulis measure; \cite[Sec.\ 20.5]{KH}. These
  observations also carry over to the case of pseudo-periodic measures,
  but then the result is entirely expected.
\end{Example}

\end{document}